\documentclass[reqno,A4paper]{amsart}
\usepackage{amsmath}
\usepackage{amssymb}
\usepackage{amsthm}
\usepackage{enumerate}
\usepackage{mathrsfs} 
\usepackage{eqlist}
\usepackage{array}
\usepackage[unicode]{hyperref}
\theoremstyle{plain}
\newtheorem{theorem}{Theorem}[section]

\newtheorem{lemma}{Lemma}[section]

\theoremstyle{definition}
\newtheorem{definition}{Definition}
\newtheorem{remark}{\textup{Remark}} 

\newtheorem{corollary}{Corollary}[section]
\newtheorem{proposition}{Proposition}[section]
\numberwithin{equation}{section}

\def\ind{1{\hskip -3 pt}\hbox{\textsc{I}}}
\def\n{\noindent}

\def\Ga{\Gamma}

\def\va{\varphi}

\def\o{\omega}
\def\O{\Omega}

\def\bR{\mathbb R}
\def\al{\alpha}

\def\w{\wedge}
\def\cn{\mathbb C^n}
\def\om{\omega}
\def\pa{\partial}

\def\R{\mathbb{R}}

\def\ga{\gamma}
\def\wed{\wedge}
\def\Om{\Omega}

\def\d{\partial}

\begin{document}

\title[Continuous solutions of Dirichlet problem to Hessian type equations]%
{Continuous solutions of Dirichlet problem to Hessian type equations for $(\omega,m)-\beta$-subharmonic functions on a ball in $\mathbb{C}^n$}
\author[L. M. Hai \and N. V. Phu \and T. Tung]%
{Le Mau Hai* \and Nguyen Van Phu** \and Trinh Tung***}
\date{\today,\ Revision}
\thanks{Preprint submitted to \textsc{Mathematica Slovaca}}
\newcommand{\acr}{\newline\indent}

\address{\llap{*\,}Department of Mathematics,\acr
                   Hanoi National University of Education,\acr
                   136-Xuan Thuy Rd., Hanoi,\acr VIETNAM.
                   \acr
                  Orcid: \url{https://orcid.org/0000-0002-2264-8305}}
\email{mauhai@hnue.edu.vn}

\address{\llap{**\,}Department of Mathematics, Faculty of Natural Sciences,\acr
                    Electric Power University,\acr
                    235-Hoang Quoc Viet Rd., Hanoi,\acr
                     VIETNAM.\acr
                    Orcid: \url{https://orcid.org/0000-0002-2851-7250}}
\email{phunv@epu.edu.vn}

\address{\llap{***\,}Department of Mathematics, Faculty of Natural Sciences,\acr
	Electric Power University,\acr
	235-Hoang Quoc Viet Rd., Hanoi,\acr
	VIETNAM.\acr
	Orcid: \url{https://orcid.org/0000-0002-0681-7447}}
\email{tungtrinhvn@gmail.com}

\subjclass[2020]{Primary 32U05; Secondary 32Q15, 32W20} 
\keywords{$m-\beta$-subharmonic functions, $(\omega,m)-\beta$-sh functions, Hermitian forms, Complex Hessian type equation.}

\begin{abstract}
In this paper, we investigate the continuity of solutions to the Dirichlet problem for complex Hessian-type equations associated with $(\omega, m)-\beta$-subharmonic functions on a ball in $\mathbb{C}^n$, where
$
\beta=d d^c\|z\|^2=\frac{i}{2} \sum_{j=1}^n d z_j \wedge d \bar{z}_j
$
is denoted the flat metric on $\mathbb{C}^n$.
\end{abstract}

\maketitle
\section{Introduction}

\n \textbf{Background.} Throughout this paper, we consider the problem on a ball, which we denote by $\Omega$. Moreover, on $\mathbb{C}^n$, we use the flat Hermitian metric induced by the canonical K\"ahler form $\beta=\frac{\sqrt{-1}}{2}\sum\limits_{j=1}^n dz_j\wed d\bar{z}_j$. Let $1\leq m\leq n$ be an integer. Set 
$$\Gamma_m(\beta)=\big\{\gamma: \gamma \text{ is a real $(1,1)$-form},\gamma^k\wed \beta^{n-k}(z)>0\big\},\ k = \overline{1,m},$$

\n for all $z\in\Omega$. Assume that $\om\in \Gamma_m(\beta)$ is a smooth real $(1,1)$-form on $\Omega$. Let $u\in C^2(\Om,\bR)$ be given. The complex Hessian operator related to $\om$ acting on  $u$ is defined by 
$$
H_{m,\o}(u) = (\o+dd^c u)^m \wedge \beta^{n-m}.
$$

For $\om = 0,$    B{\l}ocki in \cite{Bl05}  defined the Hessian operator acting on bounded $m$-subharmonic functions (not necessarily smooth).
In this case, the Hessian operator $H_{m,0}(\bullet)$ is a positive Radon measure which is stable under monotone sequences, and the homogeneous Dirichlet problem is solvable on a ball in $\cn$. To make it convenient for presentation, from now on we use the symbol $H_m(\bullet)$ instead of $H_{m,0}(\bullet)$.

According to \cite{GN18,KN23c}, a function $u:\Om \to [-\infty,+\infty)$ is called $(\o,m)-\beta$-subharmonic (abbreviation is $(\o,m)-\beta$-sh) if $u$ is upper semi-continuous on $\O$ and $u\in L^1_{\rm loc}(\Om,\beta^n)$ and for any collection $\ga_1,...,\ga_{m-1} \in \Ga_m(\beta)$, 
\begin{equation}\label{1.1}
(\o+dd^c u )\wed \ga_1 \wed \cdots \wed \ga_{m-1} \wed \beta^{n-m}\geq 0
\end{equation} in the sense of currents.
The cone of $(\o,m)-\beta$-sh (resp. negative) functions on $\Om$ is denoted by $SH_{ \om,m} (\Om$) (resp. $SH_{ \om,m}^{-} (\Om)$). In the case when $\o=0,$ to make it convenient for readers, from now on we use symbol $SH_{m} (\Om$) (resp. $SH_{m}^{-} (\Om)$) instead of $SH_{ 0,m} (\Om$) (resp. $SH_{ 0,m}^{-} (\Om)$).\\
For a $C^2$ function $u$, the inequality \eqref{1.1} is equivalent to inequalities
$$
(\o+dd^cu)^k\wed\beta^{n-k}(z)\geq 0\,\,\text{for}\,\,k=1,\ldots,m,
$$
\n for all $z\in\O$.
Let $\rho$ be a strictly plurisubharmonic function on $\O$ such that $dd^c\rho\geq \o$ on $\Omega$ and $u\in SH_{ \om,m} (\Om)$. Then by Definition \eqref{1.1} we have $u+\rho$ is a $m$-sh function on $\O$. We write \begin{equation*}
\tau=dd^c\rho-\o,
\end{equation*} which is a smooth $(1,1)$-form. It follows that $\omega+dd^cu=dd^c(u+\rho)-\tau.$ We define $$H_{m,\om}(u) =(\o+dd^cu)^m\w\beta^{n-m}:=\sum_{k=0}^m\binom{m}{k}(-1)^{m-k}[dd^c(u+\rho)^k]\w\tau^{m-k}\w\beta^{n-m}.$$

\n Following the traditional inductive method as in \cite{BT1}, the Hessian operator $H_{m,\omega} (u)$ can be defined
over the class of locally bounded $(\o,m)-\beta$-sh functions $u$ as  positive Radon measure which puts no mass on $m$-polar sets. See Section 9 in \cite{KN23c} and \cite{GN18} for details. 
Moreover, in \cite{GN18}, Dongwei Gu and Ngoc Cuong Nguyen developed a pluripotential theory parallel to the classical framework established in \cite{BT1}. Within this setting, several fundamental properties such as the continuity under monotone sequences of the Hessian operator, quasi-continuity, and the comparison principle remain valid. It is worth emphasizing that Remark 3.19(b) in \cite{GN18} asserts that any bounded \((\omega,m)\)-\(\beta\)-subharmonic function can be approximated by a decreasing sequence of smooth $(\o,m)-\beta$-sh functions.

\n 
From now on, unless otherwise specified, we always assume that 
$\mu$ is a positive Radon measure on 
$\Om ,\phi:\partial \O \to \mathbb{R}$ is continuous function 
and $F(t,z): \R\times \Om \to [0,+\infty)$ is a given function.
The Hessian type equation is to seek  $u\in SH_{\omega,m}(\O)$ such that 

\begin{equation}\label{eq1.5}
\begin{cases}
u\in SH_{\om,m}(\Om)\cap L^{\infty}(\overline{\Om})\\
H_{m,\o}(u)=F(u,z)d\mu\\
\lim\limits_{\Omega\ni z\to x}u(z)=\phi (x), \forall x\in\pa \Om. 
\end{cases}
\end{equation}
In the case when \( m = n \) and \( \omega = 0 \), this equation is referred to as a Monge-Amp\`ere type equation. Monge-Amp\`ere type equations have attracted the interest of many mathematicians.
When \( d\mu = dV_{2n} \), where \( dV_{2n} \) denotes the Lebesgue measure on \( \mathbb{C}^n \), Bedford and Taylor \cite{BT79} established the existence and uniqueness of a continuous solution to the problem \eqref{eq1.5} under the assumption that \( F(t, z) \in C^0(\mathbb{R} \times \overline{\Omega}) \), and that \( F^{1/n} \) is convex and non-decreasing in \( t \).
Subsequently, still in the setting \( m = n \) and \( \omega = 0 \), Cegrell \cite{Ce84} extended this result by proving the existence of solutions when \( F(t,z) \) is a bounded function that is continuous with respect to the first variable for each fixed \( z \in \Omega \).
Furthermore, in the case where the measure \( d\mu = (dd^c v)^n \) with \( v \in \mathrm{PSH}(\Omega) \cap L^\infty(\Omega) \) satisfying \( \lim\limits_{z \to x} v(z) = \phi(x) \) for all \( x \in \partial \Omega \), and where \( F(t, z) \) is a bounded function that is non-decreasing and continuous in the first variable and \( d\mu \)-measurable in the second one, Ko{\l}odziej \cite{K00} proved the existence and uniqueness of solutions to the Monge-Amp\`ere type equation.
More recently, when \( \Omega = M \) is a compact Hermitian manifold with boundary $\pa M $, $\pa M$ is not a empty set and $\om$ is a Hermitian metric on $M$, the problem \eqref{eq1.5} has been discussed in \cite[Theorem 2.3]{KN23b}. 
In this work, Ko{\l}odziej and Cuong established the existence of bounded solutions of Equation \eqref{eq1.5} in the case $m=n$, i.e, the existence of bounded solutions of Monge-Amp\`ere type equation on a compact Hermitian manifold  $\overline{M}= M\cup\pa M$. Note that, in this case, they put some assumptions for the function $F(t,z)$. Namely, the function $F(t,z)$ is bounded and continuous and non-decreasing in the first variable.  Next, in \cite{KN23} Ko{\l}odziej and Cuong solved the Dirichlet problem for the Monge-Amp\`ere equation on Hermitian manifolds with boundary $M$. They investigate the following problem.
\begin{equation}\label{1.6}
\begin{cases}
u\in PSH(M,\om)\cap L^{\infty}(\overline{M})\\
(\om + dd^c u)^n=\mu\\
\lim\limits_{M\ni z\to x}u(z)=\varphi (x), \forall x\in\pa M. 
\end{cases}
\end{equation}
Here $(\overline{M},\om)$ is a $C^{\infty}$ smooth compact Hermitian manifold of dimension $n$ with non-empty boundary $\pa M$ and $\overline{M}= M\cup\pa M$. The solution of Equation \eqref{1.6} is sought in the class of $\omega$-plurisubharmonic functions on $M$. This constitutes the Dirichlet problem for the complex Monge-Amp\`ere equation on a Hermitian manifold with boundary.
Theorem~\textbf{1.2} in the aforementioned paper confirms that if there exists a subsolution $\underline{u}$ to Equation \eqref{1.6}, then  Equation \eqref{1.6} has a solution in the class of $\om$-plurisubharmonic functions on $M$. It is also worth noting that in the above-mentioned paper, the authors investigated the continuity of solutions of Equation \eqref{1.6}. According to Corollary {\bf 1.3} in that paper, if the measure $\mu$ belongs to $\mathcal{F}(M, h)$ for some admissible function $h$, then the solution to Equation \eqref{1.6} is continuous on $\overline{M}$.

Being directly influenced by the above is also the driving motivation for this work. In this paper, we want to extend the results of the above authors from Monge-Amp\`ere equations to Hessian-type equations. First, we establish the existence of bounded solutions for Hessian type equations \eqref{eq1.5} in the class $SH_{\om,m}(\O)$ of $(\om,m)-\beta$-sh functions introduced and investigated in \cite{KN23c}. The method used to prove the existence of  bounded solutions to the Hessian type equation \eqref{eq1.5} in the class $SH_{\omega,m}(\Omega)$ is based on fixed point theory. However, unlike the similar technique used in \cite{KN23} for Monge-Amp\`ere type equations in the class of $\omega$-plurisubharmonic functions, here we do not require the function $F(t,z)$ to be bounded and non-decreasing in the first variable $t$. Next, we aim to investigate under what conditions the solution to equation \eqref{eq1.5} becomes continuous, as in equation \eqref{1.7} below. To obtain the desired result, we impose an additional condition on the measure  $\mu$, namely that $\mu$ satisfies the $\gamma$-Dini-diffuse condition. Utilizing some recent results, including  Lemma 14 in \cite{HQ23}  and Lemma 1.6 in \cite{ChaZe24}, allows us to establish the continuity of the solution to equation \eqref{eq1.5} in the case $\Omega$ is a ball in $\cn$. Namely, we have the following result. Under the same hypotheses for solving equation \eqref{eq1.5} and adding the condition that the measure $\mu$ is $\gamma$-Dini-diffuse,  then the following equation has a unique continuous solution.  

\begin{equation}\label{1.7}
\begin{cases}
u\in SH_{\om,m}(\Om)\cap C^{0}(\overline{\Om})\\
H_{m,\o}(u)=F(u,z)d\mu\\
\lim\limits_{\Omega\ni z\to x}u(z)=\phi (x), \forall x\in\pa \Om. 
\end{cases}
\end{equation}

\n Our main result is stated in the following theorem:
\begin{theorem}[\textsc{Main Theorem}] \label{th1.1}
	Assume that the following conditions are satisfied:
	
	\n (a) $F(t,z)$ is semi-upper continuous on $\mathbb{R}\times\O$ and $t\mapsto F(t,z)$ is continuous.
	
	\n (b) There exist a function $0<G\in L^1_{loc}(\O,\mu)$ and $v\in SH_{m}^-(\Om)\cap L^{\infty}(\Om)$ satisfying $F(t,z)\leq G(z)$ for all $(t,z)\in\mathbb{R}\times\O$ and 
	$$\lim\limits_{\Om \ni z\to x}v(z)=0\,\forall x\in\pa \Om \ \text{and}\ G\mu\leq H_{m}(v);$$ 
	
	\n Then the following assertions hold:
	
	\n (i) The Dirichlet problem \eqref{eq1.5} has a  solution.\\
	Moreover, if $ F(t,z)$ is a non-decreasing function with respect to the first variable for every 
	$z \in \O \setminus X$ where $X$ is a Borel set with $C_m (X)=0,$ then the Dirichlet problem \eqref{eq1.5} has a unique solution.
	
	\n (ii) If, in addition to the above conditions, we assume that $F(t,z)$ is bounded on $\mathbb{R}\times\O$ and $\mu$ is {Dini--$\gamma$--diffuse} with respect to $Cap_{\om,m}(\bullet)$ then problem \eqref{1.7} has a unique solution. 
\end{theorem}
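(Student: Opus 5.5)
For (i) I will use a Schauder-type fixed point argument, following the scheme of \cite{KN23} but without using monotonicity in $t$. First, the case $F\equiv 0$: by the results of \cite{GN18} there is a bounded $(\om,m)$-$\beta$-sh solution $h_\phi$ of $H_{m,\o}(u)=0$ with boundary data $\phi$. This function is obtained as the Perron envelope of subsolutions, and a barrier built from $\rho$ and the harmonic extension of $\phi$ makes it attain $\phi$ on the ball.

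Next, for a fixed bounded Borel function $w$, the measure $F(w,z)\,d\mu\le G\mu\le H_m(v)$ is dominated by the Hessian measure of $v$. So $\underline u=h_\phi+v$ is a subsolution of $H_{m,\o}(u)=F(w,z)d\mu$. Indeed, $H_{m,\o}(h_\phi+v)\ge H_m(v)$ by expanding the mixed products of positive forms, and $h_\phi+v$ has boundary values $\phi$. The subsolution theorem (the Hessian analogue of Theorem 1.2 in \cite{KN23}, available through the comparison principle of \cite{GN18}) then gives a solution $u=T(w)$ with
\[
h_\phi+v\le T(w)\le h_\phi .
\]
The upper bound comes from the comparison principle.

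Define the convex set
\[
\mathcal K=\{u\in SH_{\om,m}(\O): h_\phi+v\le u\le h_\phi\}.
\]
It is compact in $L^1(\O,\mu)$; I argue this using $\mu\le H_m(v)$, the fact that $H_m(v)$ puts no mass on $m$-polar sets, and the quasicontinuity of \cite{GN18}, so that convergence in $L^1_{loc}(dV)$ implies convergence in $L^1(G\mu)$ modulo capacity-small sets. I then make $T$ single-valued via the unique maximal solution (or use Kakutani with convex values). Continuity of $T$ follows from three facts: $F(w_j,z)\to F(w,z)$ pointwise by continuity in $t$; dominated convergence with dominant $G$; and stability of solutions under convergence of right-hand sides dominated by $H_m(v)$. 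This stability step is where upper semicontinuity of $F$ and the Hessian stability estimates enter. Schauder then gives a fixed point.

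For uniqueness under monotonicity, suppose $u_1,u_2$ are two solutions. On $\{u_1<u_2\}\setminus X$ we have $F(u_1,z)\le F(u_2,z)$, and $X$ carries no $\mu$-mass because $C_m(X)=0$ and $\mu\le H_m(v)$. The comparison principle gives $\int_{\{u_1<u_2\}}H_{m,\o}(u_2)\le\int_{\{u_1<u_2\}}H_{m,\o}(u_1)$. Combined with the opposite inequality coming from $F$, the standard domination argument (adding $\epsilon(|z|^2-R^2)$) forces $\{u_1<u_2\}=\emptyset$, and by symmetry $u_1=u_2$.

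For (ii), $F$ is bounded, so $F(u,z)\mu\le M\mu$ and this measure is again Dini-$\gamma$-diffuse. The goal is continuity of the bounded solution $u$ from (i). The plan is a stability estimate of the form $\sup(w-u)\le C\,h(\|w-u\|_{L^1(\mu)})$ with a Dini-type modulus $h$; this is where Lemma 14 of \cite{HQ23} and Lemma 1.6 of \cite{ChaZe24} are used. I apply it to the regularizations $u_\delta$ of $u$ obtained by sup-convolution over translations $z\mapsto z+a$, $|a|<\delta$. On the ball, these translated functions are $(\o,m)$-$\beta$-sh up to an error $O(\delta)$ after correcting $\o$ with $\rho$, and near the boundary the barrier $h_\phi+v$ and continuity of $\phi$ control them.

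From the estimate I get $u_\delta-u\le C\,h(\|u_\delta-u\|_{L^1})+o(1)\to 0$, so $u$ equals the uniform limit of upper semicontinuous functions that are continuous from below, hence $u$ is continuous. Uniqueness follows as in (i).

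The main obstacle is this $L^1$–$L^\infty$ stability estimate for $(\o,m)$-$\beta$-sh functions with Dini-diffuse measures, together with the error terms from the non-closed form $\o$ in the translation argument.
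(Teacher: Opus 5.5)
Your scheme for (i) is the paper's: the set $h_\phi+v\le u\le h_\phi$, the map $T(w)$ from the subsolution theorem, then Schauder. However, the step that carries the fixed-point argument, continuity of $T$, is not actually supplied. Pointwise convergence $F(w_j,\cdot)\to F(w,\cdot)$ and domination by $G$ only give convergence of the right-hand sides. Transferring this to the solutions is exactly what has to be proved, and the Hessian stability estimates you invoke (of $L^1$--$L^\infty$ type) need a diffuseness hypothesis on $\mu$ that (i) does not make.

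The missing idea is the paper's monotone sandwich:
\begin{itemize}
\item After extracting a $\mu$-a.e.\ convergent subsequence, set $\psi^1_j=\inf_{k\ge j}F(w_k,\cdot)$ and $\psi^2_j=\sup_{k\ge j}F(w_k,\cdot)$. These are monotone in $j$, satisfy $\psi^1_j\le F(w_j,\cdot)\le\psi^2_j\le G$, and tend to $F(w,\cdot)$ $\mu$-a.e.
\item Solve $H_{m,\omega}(v^i_j)=\psi^i_j\mu$ with boundary data $\phi$.
\item Theorem~\ref{comparison} needs only domination of measures, so it gives $v^1_j\searrow$, $v^2_j\nearrow$ and $v^1_j\ge T(w_j)\ge v^2_j$.
\item Continuity of $H_{m,\omega}$ along monotone sequences, plus one more comparison, identifies both limits with $T(w)$. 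Hence $T(w_j)\to T(w)$ off an $m$-polar set, and so in $L^1(\mu)$.
\end{itemize}

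Three smaller points on (i):
\begin{itemize}
\item $T(w)$ is unique for fixed $w$ by Theorem~\ref{comparison}, so no maximal solution or Kakutani argument is needed.
\item Upper semicontinuity of $F$ is used only for measurability of $F(w,z)$ (Proposition~\ref{md1}), not for stability.
\item The hypothesis is $G\mu\le H_m(v)$, not $\mu\le H_m(v)$. Since $G$ is not bounded below, convergence in $L^1(G\mu)$ does not give convergence in $L^1(\mu)$. The paper gets compactness in $L^1(\mu)$ from Lemma~\ref{bd1}, using only that $\mu$ charges no $m$-polar set, which follows from $G>0$.
\end{itemize}

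Your uniqueness argument relies on the integral inequality $\int_{\{u_1<u_2\}}H_{m,\omega}(u_2)\le\int_{\{u_1<u_2\}}H_{m,\omega}(u_1)$. For a smooth $\omega\in\Gamma_m(\beta)$ that is not closed, this Bedford--Taylor form is not available: integration by parts produces terms in $d\omega$ and $dd^c\omega$ with no sign. What you have is the weak principle of Theorem~\ref{comparison}. It cannot be applied to $u_1,u_2$ directly, because $F(u_1,\cdot)\ge F(u_2,\cdot)$ on $\{u_1\ge u_2\}$, which is the wrong direction there.

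The paper gets around this with Theorem~\ref{thm 5.3} and Corollary~\ref{cor 5.5}. For two solutions $u,v$, using monotonicity of $F$ off the $\mu$-null set $X$,
\[
H_{m,\omega}(\max(u,v))\ge \mathbf{1}_{\{u>v\}}F(u,\cdot)\mu+\mathbf{1}_{\{u\le v\}}F(v,\cdot)\mu=F(\max(u,v),\cdot)\mu\ge F(u,\cdot)\mu=H_{m,\omega}(u).
\]
Theorem~\ref{comparison} applied to $u$ and $\max(u,v)$ then gives $u\ge v$.

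For (ii), your sup-convolution route genuinely differs from the paper's and looks workable.
\begin{itemize}
\item The paper takes continuous envelopes $u_j\searrow u$ in $SH^{\phi}_{\omega,m}(\Omega)\cap C^0(\overline{\Omega})$ (Lemma~\ref{bd34}), then applies Lemma~14 of HQ23 together with monotone convergence.
\item You instead correct translates by $C\delta|z|^2$ and glue them to $u$ near $\partial\Omega$ using $h_\phi+v\le u\le h_\phi$.
\end{itemize}
Your version avoids the paper's asserted continuity of the envelopes, which for non-constant $\omega$ would itself need a translation argument with the same $O(\delta)$ error. The cost is the boundary bookkeeping. Note that $\sup_{|a|<\delta}u(\cdot+a)$ is only lower semicontinuous, so apply the estimate to its upper regularization. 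Then $\sup(u_\delta-u)\to0$ directly gives a uniform modulus of continuity for $u$.
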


\noindent \textbf{Organization of the paper.} The structure of this paper is divided into three sections. In Section \ref{sec2}, following seminal works  \cite{GN18,KN23c}, among others, we recall the basic properties of \( (\omega, m) \)-\( \beta \)-subharmonic functions on \( \Omega \). Of particular importance is the comparison principle for the Hessian operator \( H_{m,\omega}(u) \). We also recall the subsolution theorem, a powerful tool for verifying the existence of solutions to the Dirichlet problem for the Hessian-type equation.
In Section \ref{sec3}, we supply in detail the proofs of our main result.
\vskip 0.3cm

\n 
{\bf Acknowledgments.} 
The first and second named authors are supported by Grant number B2025-CTT-10 from the Ministry of Education and Training, Vietnam.
This work was written during our visit to the Vietnam Institute for Advanced Study in Mathematics (VIASM) in the Spring of 2025. We also thank VIASM for its hospitality.

\section{Preliminaries and auxiliary results}\label{sec2}

Following Proposition 2.6 in  \cite{GN18}, we include below some basic properties of $(\o,m)-\beta$-sh functions. 

\begin{proposition} \label{prop:closure-max}
	\begin{enumerate}
		\item[(a)]
		$SH_{\o,n}(\O)\subset SH_{\o,n-1}(\O)\subset\cdots\subset SH_{\o,1}(\O)$
		\item[(b)]	If $u_1 \geq u_2 \geq  \cdots$ is a decreasing sequence of $(\o,m)-\beta$-sh functions, then $u := \lim_{j\to \infty} u_j$ is either $(\o,m)-\beta$-sh or $\equiv -\infty$.
		\item[(c)] 
		If $u, v$ belong to $SH_{ \om,m}(\Omega)$, then so does $\max\{u,v\}$.
		\item[(d)] 
		If $u, v$ belong to $SH_{ \om, m}(\Omega)$ and satisfies $u \le v$ a.e. with respect to Lebesgue measure then $u \le v$ on $\Om.$
		\item[(e)] Let $u_{\alpha}\in SH_{\om,m}(\O)$ be a family that locally uniformly bounded above. Put $u(z)=\sup\limits_{\alpha}u_{\alpha}(z)$. Then, the upper semi-continuous regularization $u^{*}$ is a $(\om,m)$-$\beta$-subharmonic function on $\O$.
	\end{enumerate}
\end{proposition}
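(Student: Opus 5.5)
The plan is to reduce each item to the corresponding statement for $m$-subharmonic functions, using a local potential for $\omega$. Since all assertions are local, or follow from local ones, I work on a small ball $B\Subset\Omega$. On such a ball I choose a smooth strictly plurisubharmonic $\rho$ with $dd^c\rho\ge\omega$, for instance $\rho=A\|z\|^2$ with $A$ large.

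First I check that \eqref{1.1} is a linear condition in $dd^c u$. Fix $\gamma_1,\dots,\gamma_{m-1}\in\Gamma_m(\beta)$ and put $T=\gamma_1\wedge\cdots\wedge\gamma_{m-1}\wedge\beta^{n-m}$. The condition then says $dd^cu\wedge T\ge -\omega\wedge T$, where the right-hand side is a fixed smooth form. Hence $SH_{\omega,m}$ is stable under:
\begin{itemize}
\item convex combinations, and
\item limits in $L^1_{\rm loc}$, because currents $dd^cu_j\wedge T$ converge weakly when $u_j\to u$ in $L^1_{\rm loc}$.
\end{itemize}

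\textbf{Item (a).} Let $k<m$ and $\gamma_1,\dots,\gamma_{k-1}\in\Gamma_k(\beta)$. I take $\gamma_k=\cdots=\gamma_{m-1}=\beta$, which lies in $\Gamma_m(\beta)$. I expect the main obstacle here: the other $\gamma_j$ only lie in the larger cone $\Gamma_k(\beta)$, while the definition for $m$ requires all $\gamma_j\in\Gamma_m(\beta)$. I would use G\aa rding's theory. By G\aa rding, $\Gamma_m\subset\Gamma_k$ and the mixed form $\gamma_1\wedge\cdots\wedge\gamma_{k-1}\wedge\beta^{n-k}$ is the polarization of $\sigma_k$. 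Positivity of $\alpha\wedge\gamma_1\wedge\cdots\wedge\gamma_{k-1}\wedge\beta^{n-k}$ for all $\gamma_j\in\Gamma_k$ characterizes $\alpha\in\overline{\Gamma_k}$. For smooth $u$ this gives the inclusion, since $\omega+dd^cu\in\overline{\Gamma_m}\subset\overline{\Gamma_k}$. The general case then follows by approximating with smooth $(\omega,m)$-$\beta$-sh functions (Remark 3.19 in \cite{GN18}, or local convolution of $u+\rho$ with an error controlled by $\rho$) and passing to the limit in currents.

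\textbf{Item (b).} Each $u_j+\rho$ is $m$-subharmonic, in particular subharmonic, and the sequence decreases. The classical result gives that the limit is either subharmonic or $\equiv-\infty$. In the first case $u_j\to u$ in $L^1_{\rm loc}$ by monotone convergence. Then \eqref{1.1} passes to the limit by the weak convergence noted above. Upper semicontinuity is clear, since $u$ is a decreasing limit of usc functions.

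\textbf{Item (c).} I use regularization. Approximate $u,v$ by decreasing sequences of smooth $(\omega,m)$-$\beta$-sh functions $u_j,v_j$, and replace $\max$ by a regularized maximum $M_\varepsilon$, which is smooth, convex and nondecreasing in each variable with partial derivatives summing to $1$. For smooth functions,
\[
dd^c M_\varepsilon(u_j,v_j)\ \ge\ \partial_1M_\varepsilon\, dd^cu_j+\partial_2M_\varepsilon\, dd^cv_j,
\]
because the Hessian term of $M_\varepsilon$ is semipositive. Since $\partial_1M_\varepsilon+\partial_2M_\varepsilon=1$, this shows $\omega+dd^cM_\varepsilon(u_j,v_j)$ is a pointwise convex combination of elements of $\overline{\Gamma_m}$ plus a positive form, hence lies in $\overline{\Gamma_m}$. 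Now let $\varepsilon\to0$ and $j\to\infty$ along a decreasing sequence, and apply (b).

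\textbf{Item (d).} I use the sub-mean-value property of $u+\rho$, $v+\rho$, which are subharmonic. For these, $u+\rho=\lim_{r\to0}$ of averages over balls of radius $r$. Averages do not see null sets, so the a.e. inequality between $u+\rho$ and $v+\rho$ becomes an inequality everywhere, and hence $u\le v$.

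\textbf{Item (e).} By Choquet's lemma, $u^*=(\sup_j u_{\alpha_j})^*$ for some countable subfamily. By (c), $w_k=\max_{j\le k}u_{\alpha_j}$ is an increasing sequence in $SH_{\omega,m}$. The functions $w_k+\rho$ are subharmonic, so $w_k\to u^*$ almost everywhere and in $L^1_{\rm loc}$ (Brelot–Cartan for subharmonic functions). Condition \eqref{1.1} passes to this limit. The usc regularization $u^*$ is usc and $L^1_{\rm loc}$, so it is $(\omega,m)$-$\beta$-sh.
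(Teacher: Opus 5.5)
Your proof is correct in substance, but it follows a different route from the paper. The paper gives no argument of its own: it quotes Proposition 2.6 of \cite{GN18}. For (d) it only adds that $u+\rho$ and $v+\rho$ are $\alpha$-subharmonic for a Hermitian form $\alpha$ with $\alpha^{n-1}=\gamma_1\wedge\cdots\wedge\gamma_{m-1}\wedge\beta^{n-m}$, so that Lemma 9.6 of \cite{GN18} applies. Your (d) is exactly this argument with $\alpha=\beta$ (mean values over balls). For the other items your route is more elementary. \cite{GN18} uses the potential theory of the elliptic operator attached to each test form, which lets it handle variable $\gamma_i$ and non-flat backgrounds. You instead put all pointwise information into the single classical subharmonic function $u+\rho$. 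You then treat \eqref{1.1} as a closed linear condition on $dd^cu$, handled by mollification, a regularized maximum and G\aa rding's inequality. This works because $\beta$ is flat, so convolution commutes with tests against constant forms; it would not carry over directly to a curved background.

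Two points need to be made explicit.

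First, the claim ``$u+\rho$ is $m$-subharmonic, in particular subharmonic'' does not follow from upper semicontinuity, local integrability and the distributional inequality. Take $\omega=\beta$ and let $u$ equal $1$ at the centre of $\Omega$ and $0$ elsewhere. This $u$ satisfies the definition as literally written, and together with $v\equiv 0$ it violates (d). So (b), (d) and (e) hold only under a convention that is implicit in the paper's assertion that $u+\rho$ is $m$-sh and in its appeal to \cite{GN18}. The convention is that $u+\rho$ is a genuine subharmonic function; equivalently, for every $z$, $u(z)$ is the limit as $r\to 0$ of the essential supremum of $u$ over $B(z,r)$. Your proof uses exactly this and should say so.

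Second, in (c) you cannot simply take smooth decreasing $(\omega,m)$-$\beta$-sh approximants of arbitrary $u,v$. Remark 3.19(b) of \cite{GN18} covers bounded functions only, and truncating by $\max(u,-N)$ presupposes (c). Use the mollification you mention in (a) instead. With $\rho=A\|z\|^2$ and a radial mollifier $\chi_\varepsilon$, set $u_\varepsilon=u*\chi_\varepsilon+cA\varepsilon^2$, where $c>0$ depends only on $\chi$. These functions decrease to $u$, and $\omega+C\varepsilon\beta+dd^cu_\varepsilon\in\overline{\Gamma_m(\beta)}$ on a fixed compact set. Run your regularized-maximum argument with $\omega+C\varepsilon\beta$ in place of $\omega$ and let $\varepsilon\to 0$; this gives (c) in general.

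With these two adjustments the proof is complete.
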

\n
Notice that $(d)$ follows from Lemma 9.6 and Definition 2.4 in \cite{GN18}  where $u+\rho$ and $v+\rho$ are viewed as $\al-$subharmonic function with any $(1,1)$ form $\al$ such that 
$
\al^{n-1} =\ga_1\w\cdots\w \ga_{m-1} \wed \beta^{n-m},$ where $\ga_1,\ldots,\ga_{m-1}$ is a certain form belonging to $\Ga_m (\beta)$.

\begin{definition}\label{dn22}
	For a Borel set $E\subset \Om,$ we set
	\begin{align*}
	Cap_{\om,m}(E,\O)&= Cap_{\om,m}(E)\\
	&:=\sup\bigg\{\int_{E}(\o+dd^cv)^m \wedge \beta^{n-m}: v\in SH_{\om,m}(\Om), 0\leq v\leq 1\bigg\},
	\end{align*}
	$$C_m(E):=\sup\bigg\{\int_{E}(dd^cv)^m \wedge \beta^{n-m}: v\in SH_{m}(\Om), 0\leq v\leq 1\bigg\}.$$
\end{definition}
\n According to Lemma 3.5 in \cite{GN18}, there exists a constant $C$ depending on $\o$ such that \begin{equation}\label{e2.1}\frac{1}{C}Cap_{\om,m}(E)\leq C_m(E)\leq C. Cap_{\om,m}(E).\end{equation}
\begin{remark}\label{remark}{\rm By Corollary {\bf 3.2} in \cite{SA12} we note that if $C_m(E)=0$ then $E$ is a $m$-polar subset in $\O$. Hence, by \eqref{e2.1} we get that if $Cap_{\om,m}(E)=0$ then it follows that $E$ is a $m$-polar subset in $\O$.}
\end{remark}
\n Similar to Definition {\bf 4.2} in \cite{KN23c},  we have the following definition.
\begin{definition}
	A sequence of Borel functions $u_j$ in $\Omega$ is said to converge in $(\o,m)$-capacity (or in $Cap(.))$ to $u$ if for any $\delta>0$ and $K\Subset \Omega$ we have 
	$$\lim\limits_{j\to\infty}Cap(K\cap|u_j-u|\geq\delta)=0.$$
\end{definition}
\begin{remark}\label{monocap}
	\textup{ Repeating argument as in Corollary 4.11 in \cite{KN23c} and inequality \eqref{e2.1}, monotone convergence of locally uniformly bounded sequences of
		$(\o,m)-\beta$-sh functions is convergence in $(\o,m)$-capacity.}
\end{remark}

\n A major tool in pluripotential theory is the comparison principle.
We recall a version of this result for bounded $(\o,m)-\beta$-sh. functions (see Corollary 3.11 in \cite{GN18}).

\begin{theorem}\label{comparison}
	Let $u,v$ be bounded $(\o,m)-\beta$-sh functions in  $\overline{\Omega}$ such that $\liminf\limits_{z\to\partial\Omega}(u-v)(z)\geq 0.$ Assume that $H_{m,\o}(v)\geq H_{m,\o}(u)$ in $\Omega.$ Then  $u\geq v$ on $\Omega.$
\end{theorem}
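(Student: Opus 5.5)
The statement is the comparison principle (Theorem \ref{comparison}), and I would reduce it to the classical comparison principle for $m$-subharmonic functions using the potential $\rho$ and the form $\tau=dd^c\rho-\o$. The plan is to argue by contradiction with a strictly $m$-subharmonic perturbation. Suppose $\{u<v\}\neq\emptyset$. Since both functions are bounded, after adding constants we may assume $\O$ is a ball with $\rho=A(\|z\|^2-R^2)\le 0$ and $A$ large. Fix small $\varepsilon>0$ and put $v_\varepsilon=v+\varepsilon(\|z\|^2-R^2)-\varepsilon'$, with $\varepsilon'>0$ tiny. Then $v_\varepsilon\in SH_{\o,m}(\O)$ and $\liminf_{z\to\pa\O}(u-v_\varepsilon)\ge \varepsilon'>0$. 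For $\varepsilon,\varepsilon'$ small, the set $U=\{u<v_\varepsilon\}$ is nonempty and relatively compact in $\O$.

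The key step is a measure inequality on $U$ that contradicts the Bedford--Taylor type inequality. First, by the multilinear expansion of $H_{m,\o}$ defined through $\rho$ and $\tau$, and by mixed-form positivity in $\Ga_m(\beta)$, I would show
$$H_{m,\o}(v_\varepsilon)\ge H_{m,\o}(v)+\varepsilon^m\beta^n\ge H_{m,\o}(u)+\varepsilon^m\beta^n .$$
This uses that $(\o+dd^cv+\varepsilon\beta)^m\wedge\beta^{n-m}-(\o+dd^cv)^m\wedge\beta^{n-m}$ is a sum of nonnegative mixed terms. The smooth case gives this directly, and the bounded case follows by decreasing smooth approximation (Remark 3.19(b) of \cite{GN18}) together with continuity of the operator under monotone sequences.

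Second, I would establish the local comparison inequality
$$\int_{\{u<v_\varepsilon\}}H_{m,\o}(v_\varepsilon)\le \int_{\{u<v_\varepsilon\}}H_{m,\o}(u)+C\int_{\{u<v_\varepsilon\}}(\text{lower-order terms}).$$
The extra terms come from the torsion-free but non-closed nature of $\o$. Because $\beta$ is Kähler and $\o$ is only a smooth real form, $dd^c\o\neq0$ generally, and integration by parts produces error terms involving $d\o$ and $dd^c\o$. This is exactly the mechanism in the Gu--Nguyen approach. To handle these errors I would apply a modified comparison principle: the errors are bounded by $C\|v_\varepsilon-u\|_{L^\infty(U)}$ times $\sum_k\int_U(\o+dd^c\cdot)^k\wedge\beta^{n-k}$. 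I would then run the standard trick of replacing $v_\varepsilon$ by $(1-s)v_\varepsilon+s\cdot(\text{const})$, or shrinking to $\{u<v_\varepsilon-\delta\}$ with $\delta$ close to $\sup(v_\varepsilon-u)$, so that the oscillation becomes small. The errors are then dominated by $\tfrac12\varepsilon^m\int_U\beta^n$. This yields $\varepsilon^m\int_{U_\delta}\beta^n\le \tfrac12\varepsilon^m\int_{U_\delta}\beta^n$, so $U_\delta$ has Lebesgue measure zero. Proposition \ref{prop:closure-max}(d) then forces $u\ge v_\varepsilon-\delta$ everywhere, which contradicts the choice of $\delta$ below the supremum.

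The main obstacle is controlling the integration-by-parts error terms coming from $d\o\neq0$ in the second step. I would handle them first for smooth $u,v$, via quasi-continuity and the smooth approximation on compact subsets of $U$. Once the error bounds are uniform, I would then pass to the limit. Finally, letting $\varepsilon,\varepsilon'\to0$ gives $u\ge v$ on $\O$.
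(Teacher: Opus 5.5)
The paper does not prove this statement; it quotes it from Corollary 3.11 of \cite{GN18}. Your overall scheme is the right framework: perturb $v$, shrink to sublevel sets where the oscillation of $v_\varepsilon-u$ is at most $s$, use a modified comparison inequality with torsion error terms, and conclude through Proposition \ref{prop:closure-max}(d). However, the perturbation you chose cannot close the argument.

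\textbf{The gap.} It is the sentence ``the errors are then dominated by $\tfrac12\varepsilon^m\int_U\beta^n$''. Your gain $H_{m,\omega}(v_\varepsilon)\ge H_{m,\omega}(u)+\varepsilon^m\beta^n$ is \emph{additive} and is measured by Lebesgue measure. The torsion errors, as you say yourself, are $s$ times mixed Hessian masses such as $\int_{U(s)}(\omega+dd^cu)^k\wedge(\omega+dd^cv_\varepsilon)^j\wedge\beta^{n-k-j}$. For bounded functions these masses can be singular with respect to $\beta^n$; for instance $(dd^c\max(\log\|z\|,-1))^n$ lives on a sphere. Shrinking $U(s)$ sends both $s\cdot(\text{mass})$ and $\varepsilon^m\int_{U(s)}\beta^n$ to $0$, with no control on their ratio.

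Suppose you also carry out the Ko{\l}odziej--Nguyen chain argument, absorbing lower-order terms via $\omega+dd^cv_\varepsilon\ge\varepsilon\beta$ (you would have to do this anyway). The best you obtain is $\int_{U(s)}H_{m,\omega}(v_\varepsilon)\le(1+Cs\varepsilon^{-m})\int_{U(s)}H_{m,\omega}(u)$. Combined with your gain, this only gives $\varepsilon^m\int_{U(s)}\beta^n\le Cs\varepsilon^{-m}\int_{U(s)}H_{m,\omega}(u)$, which is not a contradiction.

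\textbf{What is missing is a \emph{multiplicative} gain.}
\begin{enumerate}
\item[1.] Subtract the same constant from $u$ and $v$ so that $v\le 0$.
\item[2.] Take $\rho=A(\|z\|^2-R^2)\le0$ with $A\beta\ge\omega+\beta$, and set $\tilde v=(1+\delta)v+\delta\rho$.
\item[3.] Then $\omega+dd^c\tilde v=(1+\delta)(\omega+dd^cv)+\delta(A\beta-\omega)\ge(1+\delta)(\omega+dd^cv)+\delta\beta$. Hence $\tilde v\in SH_{\omega,m}(\Omega)$ and $\omega+dd^c\tilde v\ge\delta\beta$.
\item[4.] Since $\tilde v\le v$, the boundary condition is preserved. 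Moreover $H_{m,\omega}(\tilde v)\ge(1+\delta)^mH_{m,\omega}(u)+\delta^m\beta^n$.
\item[5.] Put $S=\inf_\Omega(u-\tilde v)$ and $U(s)=\{u<\tilde v+S+s\}$. The modified comparison inequality then gives
\[
(1+\delta)^m\int_{U(s)}H_{m,\omega}(u)+\delta^m\int_{U(s)}\beta^n\le\Bigl(1+\frac{Cs}{\delta^m}\Bigr)\int_{U(s)}H_{m,\omega}(u).
\]
\item[6.] Choose $s$ so small that $Cs\delta^{-m}<(1+\delta)^m-1$. This forces $\int_{U(s)}\beta^n=0$, so $u\ge\tilde v+S+s$ almost everywhere.
\item[7.] By Proposition \ref{prop:closure-max}(d) this holds everywhere, contradicting the definition of $S$. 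Finally let $\delta\to0$.
\end{enumerate}

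The modified comparison inequality with a multiplicative error, including the precise control of the $d\omega$ and $dd^c\omega$ terms against $m$-positive (not positive) currents, is the real content of the proof. In your sketch it is only asserted.
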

\n 
Now, we will prove the following result. Note that this result was proved in the case of $m-$ subharmonic functions (see Lemma 4.5 in \cite{PD24}).
\begin{lemma}\label{bd1}
	Assume that $\mu$ vanishes on $m-$polar sets of $\Om$ and $\mu(\Omega) < \infty.$ Let $\{u_{j}\}\in SH_{\om,m}^{-} (\Omega)$ be a sequence satisfying the following conditions:
	
	\n 
	(i) $\sup\limits_{j \ge 1} \int\limits_{\Om} -u_jd\mu <\infty;$
	
	\n 
	(ii) $u_j \to u \in SH_{\om,m}^{-} (\Om)$ a.e. $dV_{2n}.$
	
	Then we have 
	$$\lim_{j \to \infty} \int\limits_{\Om} \vert u_j- u \vert d\mu=0.$$
	In particular $u_j \to u$ a.e. $d\mu$ on $\Om.$
\end{lemma}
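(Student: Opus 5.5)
The plan is to reduce to the $m$-subharmonic case, Lemma 4.5 in \cite{PD24}, by adding the strictly plurisubharmonic potential $\rho$ with $dd^c\rho\geq\o$. Since $\Om$ is a ball, I will take $\rho=A(\|z\|^2-R^2)$ with $A$ large, so $\rho$ is smooth and bounded on $\overline\Om$ and $\rho\le 0$. Set $v_j=u_j+\rho$ and $v=u+\rho$. By the remark after \eqref{1.1}, these are $m$-subharmonic on $\Om$, and they are negative. Since $\rho$ is bounded and $\mu(\Om)<\infty$,
$$\sup_j\int_\Om -v_j\,d\mu\le \sup_j\int_\Om -u_j\,d\mu+\sup_{\overline\Om}|\rho|\,\mu(\Om)<\infty .$$
Also $v_j\to v$ a.e.\ $dV_{2n}$, and $|v_j-v|=|u_j-u|$. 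So the claim is exactly the $m$-sh statement for $\{v_j\}$, and it suffices to carry out (or cite) the proof of Lemma 4.5 of \cite{PD24} for $m$-sh functions.

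For completeness I will sketch that argument. \emph{Step 1.} Show that $\tilde v_k:=(\sup_{j\ge k}v_j)^*$ decreases to $v$ everywhere. The sequence is uniformly bounded above by $0$, so Proposition \ref{prop:closure-max}(e), applied with $\o=0$, gives $\tilde v_k\in SH_m(\Om)$. The sequence $\tilde v_k$ is decreasing with limit $\tilde v\ge v$, and the a.e.\ convergence gives $\tilde v=v$ a.e. Then Proposition \ref{prop:closure-max}(d) together with (b) gives $\tilde v=v$ everywhere. The set $\{(\sup_{j\ge k}v_j)<\tilde v_k\}$ is $m$-polar, and $\mu$ puts no mass on $m$-polar sets. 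Hence $\mu$-a.e.\ we get $\limsup_j v_j\le v$. \emph{Step 2.} Show $\int v\,d\mu\le\liminf_j\int v_j\,d\mu$. For this I will use the Banach--Saks/weak compactness trick: $\{v_j\}$ is bounded in $L^1(\mu)$, so pass to a subsequence along which $\int v_j\,d\mu$ converges to the liminf. Convex combinations $w_k$ of $\{v_j\}_{j\ge k}$ are $m$-sh, still converge to $v$ a.e.\ $dV_{2n}$, and satisfy $\int w_k\,d\mu\to\lim\int v_j\,d\mu$. Applying Step 1 to $w_k$ gives $\limsup w_k\le v$ $\mu$-a.e.

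To conclude, I will write $|v_j-v|=2\max(v_j,v)-v_j-v$. By Step 1 and reverse Fatou (the functions $\max(v_j,v)$ are $\le 0$, dominated above), $\limsup\int\max(v_j,v)\,d\mu\le\int v\,d\mu$. Combined with the lower bound on $\int v_j$ this gives the limit zero. The lower bound actually needed is $\liminf\int v_j\,d\mu\ge\int v\,d\mu$, and it is the main obstacle. Uniform integrability is not given, so Fatou points the wrong way. The plan is to get it from $\max(v_j,v)\ge v_j$ and $\max(v_j,v)\ge v$ via the approximation $\tilde v_k\searrow v$, with monotone convergence giving $\int\tilde v_k\,d\mu\to\int v\,d\mu$. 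This is the point where I would follow \cite{PD24} closely, using the hypothesis that $\mu$ charges no $m$-polar sets and the capacity comparison \eqref{e2.1}. Finally, $L^1(\mu)$ convergence yields a $\mu$-a.e.\ convergent subsequence. The Step 1 bound $\limsup v_j\le v$, together with Fatou applied to $\liminf$, upgrades this to $u_j\to u$ $\mu$-a.e.\ along the whole sequence.
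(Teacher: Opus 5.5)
Your reduction is the same as the paper's. The paper's entire proof is to add $\rho$ with $dd^c\rho\ge\omega$ and quote Lemma 4.5 of \cite{PD24}. Your choice $\rho=A(\|z\|^2-R^2)\le 0$ is slightly more careful, since it keeps $v_j\le 0$ and transfers (i) using $\mu(\Omega)<\infty$.

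\textbf{The gap.} It is the step you flag as the main obstacle, $\liminf_j\int v_j\,d\mu\ge\int v\,d\mu$, and it cannot be closed: with only the $L^1(\mu)$ bound in (i), the statement is false. Here is a counterexample.
\begin{itemize}
\item Let $\Omega$ be the unit ball and $r_k=e^{-4^k}$.
\item Let $\lambda_k$ be normalized Lebesgue measure on $B_k=\{\|z\|<r_k\}$, and $\mu=\sum_{k\ge 1}2^{-k}\lambda_k$. Then $\mu(\Omega)=1$, and since $\mu\ll dV_{2n}$ it charges no $m$-polar set.
\item Let $u_j=2^{-j}\max(\log\|z\|,-4^j)$. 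This is psh and $\le 0$, hence lies in $SH^-_{\omega,m}(\Omega)$ because $\omega\in\Gamma_m(\beta)$.
\end{itemize}
Then $u_j\to u:=0$ on $\Omega\setminus\{0\}$. Moreover $-u_j\equiv 2^j$ on $B_k$ for $k\ge j$, and $-u_j\le 2^{-j}\log(1/\|z\|)$ everywhere. These give $2\le\int_\Omega(-u_j)\,d\mu\le 3$ for every $j$. So (i) and (ii) hold, and $u_j\to u$ even $\mu$-a.e., yet $\int_\Omega|u_j-u|\,d\mu\ge 2$. Mass escapes to the origin; uniform integrability is exactly what is missing.

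\textbf{Where your sketch breaks.} It is Step 2. A bounded set in $L^1(\mu)$ is not weakly relatively compact (Dunford--Pettis needs uniform integrability), so Banach--Saks is unavailable. Koml\'os' theorem would still give $\mu$-a.e. convergent Ces\`aro means, but not convergence of their integrals, which is what you need.

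\textbf{The fix.} Your outline works under an $L^2(\mu)$ bound, $\sup_j\int_\Omega(-u_j)^2\,d\mu<\infty$, which is the form in which this lemma is usually stated for psh functions.
\begin{itemize}
\item A subsequence converges weakly in $L^2(\mu)$. Banach--Saks gives Ces\`aro means converging in $L^2(\mu)$, and after a further subsequence $\mu$-a.e.
\item These means lie in $SH^-_{\omega,m}(\Omega)$ and still tend to $u$ a.e. $dV_{2n}$. Your Step 1 then identifies the limit with $u$ $\mu$-a.e. This needs that Step 1 yields $\limsup=u$ outside an $m$-polar set, not merely $\le$, and your argument does give the equality.
\item Testing the weak convergence against $1\in L^2(\mu)$ gives $\int u_j\,d\mu\to\int u\,d\mu$, for the whole sequence by the subsequence principle.
\item Your identity $|u_j-u|=2\max(u_j,u)-u_j-u$ together with reverse Fatou then finishes the proof.
\end{itemize}
In the paper's application the $\chi_j$ are uniformly bounded by \eqref{eq3.2}, so this stronger hypothesis is available there.

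\textbf{Your last sentence is also incorrect.} Fatou only yields $\liminf_j|v_j-v|=0$ $\mu$-a.e., and $L^1(\mu)$ convergence gives $\mu$-a.e. convergence only along a subsequence. A counterexample is a typewriter sequence $\max(\varepsilon_j\log(\|z-p_j\|/2),-1)$, with $\varepsilon_j\to 0$ slowly and $p_j$ repeatedly sweeping a sphere that carries $\mu$. It converges in $L^1(\mu)$ and a.e. $dV_{2n}$, but not $\mu$-a.e. The subsequence version is all the paper actually uses.
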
		
\begin{proof}
	Let $\rho\in C^2(\overline{\O})\cap PSH(\O)$  such that $dd^c\rho\geq \o$. Then $u_j+\rho$ and $u+\rho$ are $m$-subharmonic. Applying Lemma 4.5 in \cite{PD24} with functions $u_j+\rho$ and $u+\rho$ we get the desired  conclusion.
\end{proof}

\n
Now we formulate the following subsolution Theorem  which plays a important role in our work (see Lemma 9.3 in \cite{KN23c}).
\begin{theorem}\label{sub}
	Assume that there exists $v \in SH_{m} (\Om) \cap L^\infty (\Om)$ satisfying 
	$$\label{eq:bounded-subsol}
	H_{m}(v) \geq \mu, \quad \lim_{z\to \d \Om}  v(z) = 0.
	$$	
	Then, there exists a unique bounded $(\o,m)-\beta$-sh function $u$ solving  
	$$\lim_{z\to x} u(z) = \phi (x), \forall  x\in\d\Om,
	H_{m,\o}(u) = \mu\ \text{on}\  \Om.
	$$
\end{theorem}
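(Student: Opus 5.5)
The plan is to prove uniqueness with the comparison principle and existence with a Perron envelope built between an explicit subsolution and an explicit upper barrier.

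\textbf{Uniqueness.} If $u_1,u_2$ are two bounded solutions, then $H_{m,\o}(u_1)=H_{m,\o}(u_2)=\mu$ and $\liminf_{z\to\pa\Om}(u_1-u_2)\ge 0$, because both have boundary values $\phi$. Theorem \ref{comparison} applied in both directions gives $u_1=u_2$.

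\textbf{Barriers.} For the subsolution, I first build a continuous $\psi\in SH_{\o,m}(\Om)\cap C^0(\overline\Om)$ with $\psi=\phi$ on $\pa\Om$. Approximate $\phi$ uniformly by smooth functions $\phi_k$ on $\overline\Om$, and add a large multiple $A_k(\|z\|^2-r^2)$, where $r$ is the radius of the ball; this makes $\o+dd^c\psi_k\in\Gamma_m(\beta)$ while keeping the boundary values. Then take a decreasing/uniform limit, or equivalently a Perron envelope of such functions, using Proposition \ref{prop:closure-max}.

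Set $\underline u=\psi+v$. Since $\o+dd^c\psi$ and $dd^cv$ lie in the closure of $\Gamma_m$, expanding the mixed product and using that all mixed terms are positive gives
$$
H_{m,\o}(\underline u)\ge (dd^c v)^m\w\beta^{n-m}=H_m(v)\ge\mu .
$$
For this step I would argue on smooth approximants first and then pass to the limit using monotone continuity of the operator. Moreover $\underline u$ has boundary values $\phi$.

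For the upper barrier, take $\rho$ as in the paper. For any $w\in SH_{\o,m}(\Om)$ with boundary values $\le\phi$, the function $w+\rho$ is $m$-subharmonic, hence subharmonic. Therefore $w\le h-\rho$, where $h$ is the harmonic extension of $(\phi+\rho)|_{\pa\Om}$, and $h-\rho$ is continuous with boundary values $\phi$.

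\textbf{Perron envelope.} Let
$$
\mathcal B=\{w\in SH_{\o,m}(\Om)\cap L^\infty:\ w\le h-\rho,\ H_{m,\o}(w)\ge\mu\}.
$$
This class contains $\underline u$. Set $U=(\sup\mathcal B)^*$. By Proposition \ref{prop:closure-max}(e), $U$ is $(\o,m)$-$\beta$-sh. It is squeezed as $\underline u\le U\le h-\rho$, so $U$ attains $\phi$ at the boundary. By Choquet's lemma, $U$ is the limit of an increasing sequence of maxima of elements of $\mathcal B$. The inequality $H_{m,\o}(\max(w_1,w_2))\ge\mu$ follows from the maximum principle for the operator on bounded functions (Gu--Nguyen), and monotone continuity then gives $H_{m,\o}(U)\ge\mu$.

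To get equality I would use a balayage argument on small balls $B\Subset\Om$. Solve $H_{m,\o}(w)=\mu|_B$ on $B$ with boundary data $U|_{\pa B}$, glue this solution with $U$ outside $B$, and obtain a larger member of $\mathcal B$. This forces $H_{m,\o}(U)=\mu$ on $B$.

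\textbf{Main obstacle.} The hard step is this local solvability with a merely bounded-subsolution measure and merely usc boundary data. I would first attack it by regularizing. Approximate $U|_{\pa B}$ from above by continuous data, and approximate $\mu|_B$ by measures $\mu_j=f_j\,dV$ with smooth $f_j$ dominated appropriately, using the domination $\mu\le H_m(v)$ and smoothing $v$. Solve these smooth problems using the Gu--Nguyen smooth theory. The sequence of solutions is uniformly bounded by the barriers above. Finally, pass to the limit: extract a convergent sequence via comparison, obtain convergence in capacity (Remark \ref{monocap}), and conclude weak convergence of the Hessian measures. The uniform domination by $v$ is what makes this convergence, and hence the whole argument, work.
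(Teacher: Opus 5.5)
The paper gives no proof of this statement; it imports it from Lemma 9.3 of \cite{KN23c}, so your argument has to stand on its own.

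\textbf{What works.} Your uniqueness argument (Theorem \ref{comparison} applied twice) is correct. The first half of the existence part is also sound. The functions $\psi+v$ and $h-\rho$ are legitimate barriers, and the envelope $U$ does satisfy $H_{m,\o}(U)\ge\mu$ with boundary values $\phi$. For $\psi$ you need the envelope version, since $\phi_k+A_k(\|z\|^2-r^2)$ has no limit when $A_k\to\infty$. Alternatively, take the continuous maximal function with boundary values $\phi$ from \cite[Theorem 3.15]{GN18}, as the paper does.

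\textbf{The gap is the balayage step, and it is not a technicality.} Solving $H_{m,\o}(w)=\mu|_B$ on a ball $B$ with prescribed boundary values is the theorem itself on $B$. But the hypothesis does not localize: $v$ does not vanish on $\pa B$, and $\mu|_B$ need not admit any bounded subsolution on $B$ tending to $0$ on $\pa B$.

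This already fails for $n=m=1$ with $\o$ a constant multiple of $\beta$, where everything reduces to the Laplacian. Fix $z_0\in\pa B$ and pick $w_k\in B$ on the radius through $z_0$ with $d_k=\mathrm{dist}(w_k,\pa B)=e^{-2^k}$. Let $\mu$ be the sum of the uniform measures of mass $4^{-k}$ on the circles $\{|z-w_k|=s_k\}$, where $s_k=e^{-4^k}$.
\begin{itemize}
\item Since $d_{k+1}=d_k^2$, for every $z$ at most one $k$ satisfies $|z-w_k|<d_k/4$. So the logarithmic potential of $\mu$ is at most $1+\sum_k 4^{-k}\log(4/d_k)<\infty$. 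Hence $\mu=dd^cv$ with $v$ bounded and $v=0$ on $\pa\Om$.
\item On the other hand, the Green potential of $B$ satisfies $G_B\mu(w_k)\ge 4^{-k}\log(d_k/s_k)=1-2^{-k}$, up to the normalizing constant of $dd^c$.
\item By the Riesz decomposition, every bounded $w$ on $B$ with $dd^cw=\mu$ and $\limsup w\le f$ on $\pa B$ satisfies $w\le P_Bf-G_B\mu$.
\end{itemize}
So no solution attains continuous data $f$ at $z_0$: the local problem as you pose it may have no solution at all. You give no mechanism for choosing good balls. With a weaker boundary notion you would have to redo the comparison step, which in the nonlinear case brings you back to the original problem.

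\textbf{The regularization you sketch does not close this gap.}
\begin{itemize}
\item The hypothesis allows $\mu$ to be singular with respect to Lebesgue measure, e.g.\ carried by a real hypersurface when $m=1$, or by a totally real torus when $m=n$. A monotone sequence of absolutely continuous measures always has an absolutely continuous limit. So the approximants $f_j\,dV$ cannot be chosen monotone, the comparison principle yields no monotone sequence of solutions, and Remark \ref{monocap}, which concerns monotone sequences only, is unavailable.
\item The $f_j\,dV$ are not dominated by $H_m$ of a fixed bounded function vanishing on $\pa B$. The function $v*\chi_\varepsilon$ does not vanish on $\pa B$. Moreover, for $m\ge 2$ the inequality $H_m(v)*\chi_\varepsilon\le H_m(v*\chi_\varepsilon)$ fails in general, because $H_m$ is nonlinear.
\item Consequently the ``uniform domination by $v$'' you rely on is absent for the approximants. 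You lose both the uniform lower bound and any control on the limit of $H_{m,\o}(w_j)$. For $m\ge2$, plain $L^1$ convergence of a non-monotone bounded sequence does not give weak convergence of Hessian measures.
\end{itemize}
What is missing is a genuine stability mechanism for non-monotone approximation. One option is a Cegrell-type statement as in Lemma \ref{bd1} for the fixed measure $\mu$, upgraded to convergence in capacity. Another is a compactness scheme working directly with the global subsolution $v$ on $\Om$. That mechanism is the actual content of the result the paper imports.
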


\n We also need the following result.
\begin{proposition}\label{md1}
	Let $\mu $ be a positive Radon measure on $\O$ and 
	$F(t,z): \R\times \Om \to [0,+\infty)$ be a upper semi-continuous function,  $t\mapsto F(t,z)$ be continuous in the first variable $t$. Moreover, assume that there exists a function $G\in L^1_{loc}(\O,\mu)$ such that
	$$F(t,z)\leq G(z), \\ \text{for all $(t,z)\in \mathbb{R}\times\O$}.$$
	Then 
	$F(u,z)d\mu$ is a positive Radon measure on $\O,$ where $u$ is upper semi-continuous function on $\O$.
\end{proposition}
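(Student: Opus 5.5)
The plan is to show that $z\mapsto F(u(z),z)$ is a nonnegative Borel function, locally dominated by the $\mu$-integrable function $G$. Then $F(u,z)\,d\mu$ is a positive measure that is finite on compact sets, i.e. a positive Radon measure.

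\emph{Step 1 (Borel measurability).} Consider the map $\Phi:\Omega\to\mathbb{R}\times\Omega$, $\Phi(z)=(u(z),z)$. Here $u$ is upper semi-continuous and, being real-valued where $F(u,z)$ is evaluated, we treat it as taking values in $[-\infty,+\infty)$. On the $m$-polar (hence $\mu$-negligible, if needed) set $\{u=-\infty\}$ we set $F(u,z):=\lim_{t\to-\infty}F(t,z)$ when this limit exists, or simply $0$; this set is Borel and does not affect the argument. On $\{u>-\infty\}$ the function $u$ is Borel, being upper semi-continuous, so $\Phi$ is Borel measurable into $\mathbb{R}\times\Omega$. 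Since $F$ is upper semi-continuous on $\mathbb{R}\times\Omega$, each superlevel set $\{F\ge c\}$ is closed, so $F$ is Borel. Hence $F\circ\Phi$ is Borel on $\Omega$. In fact we do not even need the continuity in $t$ for this step. An alternative route, if one prefers to use that hypothesis, is to approximate $u$ from above by continuous functions $u_k\downarrow u$. Each $F(u_k(z),z)$ is then upper semi-continuous in $z$, hence Borel, and continuity in $t$ gives $F(u_k(z),z)\to F(u(z),z)$ pointwise, so the limit is Borel.

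\emph{Step 2 (local finiteness).} Since $0\le F(u(z),z)\le G(z)$ and $G\in L^1_{loc}(\Omega,\mu)$, for every compact $K\Subset\Omega$ we get
$$\int_K F(u,z)\,d\mu\le\int_K G\,d\mu<\infty.$$
Thus $\nu(E):=\int_E F(u,z)\,d\mu$ defines a positive Borel measure, by countable additivity via monotone convergence, and it is finite on compacts. On the open set $\Omega\subset\mathbb{C}^n$, which is locally compact and second countable, every locally finite Borel measure is regular. Therefore $\nu$ is a positive Radon measure; equivalently, $f\mapsto\int fF(u,z)\,d\mu$ is a positive linear functional on $C_c(\Omega)$, and the Riesz representation theorem applies.

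\emph{Main obstacle.} The only delicate point is the measurability of the composition $z\mapsto F(u(z),z)$ when $F$ is only jointly upper semi-continuous and $u$ may take the value $-\infty$. The product-map argument of Step 1 handles this, with the convention on $\{u=-\infty\}$. Everything else reduces to the domination by $G$.
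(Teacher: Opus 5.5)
Your proof is correct and has the same two-step skeleton as the paper (measurability of $z\mapsto F(u(z),z)$, then domination by $G$), but you carry out both steps differently.

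\textbf{Measurability.} The paper uses exactly your ``alternative route'': continuous $u_k\searrow u$, upper semi-continuity of $z\mapsto F(u_k(z),z)$ from the joint upper semi-continuity of $F$, and passage to the limit via continuity in $t$. Your main argument writes $F(u(z),z)=F\circ\Phi(z)$ with $\Phi(z)=(u(z),z)$ Borel; second countability makes the Borel $\sigma$-algebra of $\mathbb{R}\times\Omega$ the product one. This is shorter, and it shows that only Borel measurability of $F$ matters. So continuity in $t$ is superfluous for this proposition. The paper genuinely needs that hypothesis only later, for the pointwise limits of $\psi^1_j,\psi^2_j$ when proving continuity of $T$.

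\textbf{Radon property.} The paper applies the Riesz representation theorem to $\varphi\mapsto\int\varphi F(u,z)\,d\mu$ on $C^\infty_0(\Omega)$, using the bound $\|\varphi\|_\infty\int_{\mathrm{supp}\,\varphi}G\,d\mu$. You instead define $\nu(E)=\int_E F(u,z)\,d\mu$ directly and invoke automatic regularity of locally finite Borel measures on $\Omega$. Your version identifies the measure as the one with density $F(u,\cdot)$ with respect to $\mu$, and it avoids extending the functional from $C^\infty_0$ to $C_c$.

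\textbf{One small correction.} For an arbitrary upper semi-continuous $u$, the set $\{u=-\infty\}$ need not be $m$-polar. Moreover, $\mu$ is not assumed here to vanish on $m$-polar sets, so ``$m$-polar (hence $\mu$-negligible)'' is unjustified. It is harmless: setting $F(u,z):=0$ on the $G_\delta$ set $\{u=-\infty\}$ already gives a Borel function dominated by $G$. The statement and the paper's approximation argument tacitly take $u$ real-valued anyway, as it is in the application.
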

\begin{proof}
	First, we show that $F(u,z)$ is $\mu-$ measurable on $\O$.	
	Let $u_k$ be a sequence of continuous functions on $\O$ that decreases to $u$ pointwise on $\O.$
	It follows from the assumption $F(t,z)$ is continuous in the first variable that $F(u_k, z)$ converges pointwise to $F(u,z)$ for all $z \in \Om.$ 
	Now we claim that $F(u_k, z)$ is upper semicontinuous on $\O.$ Indeed, fix a sequence $\O \ni \{z_j\} \to z^* \in \O.$
	Then $u_k (z_j) \to u_k (z^*)$ as $j \to \infty.$ Hence $(u_k (z_j), z_j) \to (u_k (z^*), z^*)$ as $j \to \infty$. It follows from the  hypothesis $F$ is upper semi-continuous on $\mathbb{R}\times\Om$  that
	$$\limsup_{j \to \infty} F(u_k (z_j), z_j) \le F(u_k (z^*), z^*).$$
	Hence $F(u_k,z)$ is non-negative upper semi-continuous on $\O$ as claimed. This yields that  $F(u_k,z)$ is $\mu-$ measurable on $\Om$ and, consequently, $F(u,z)$ is $\mu-$ measurable on $\O.$ 
	To finish off, we should check $F(u,z)d\mu$ is a continuous linear functional on the space $C^{\infty}_0(\Om)$ of smooth functions with compact support on $\Om$. It is clear the mapping $\va\mapsto \int\limits_{\O}\va F(u,z)d\mu$ is a linear functional on $C^{\infty}_0(\Om)$. Next, we need to check that it is continuous on $C^{\infty}_0(\Om)$ with the norm $\|\va\|_{\infty}=\sup\limits_{x\in\O}|\va(x)|$ for $\va\in C^{\infty}_0(\Om)$. Let $\va \ge 0$ be a smooth function with compact support in $\O.$ Then we have
	\begin{align*}
	0\leq\int\limits_{\O}\va F(u,z)d\mu&\leq \int\limits_{\O}\va G(z) d\mu\\
	&=\int\limits_{{\rm supp \va}}\va G(z)d\mu\leq \|\va\|_{\infty}\int\limits_{{\rm supp \va}}G(z) d\mu<+\infty.
	\end{align*}
	
	\n	Therefore, by the Riesz representation theorem, $F(u,z)d\mu$ can be identified with a positive Borel measure on $\Om$. Now we prove $F(u,z) d\mu$ is a positive Radon measure. Let $K\Subset\O$. Take an open subset $U\Subset\O$ with $K\Subset U\Subset\O$. Choose $\va\in C^{\infty}_0(\Om)$ with ${\rm supp}\va\subset U$ and $\va=1$ on $K$, $0\leq \va\leq 1$. Then we have
	$$\int\limits_{K} F(u,z) d\mu\leq \int\limits_{U}\va F(u,z)d\mu<+\infty,$$
	
	\n and the required conclusion follows.
\end{proof}

\n Finally, we deal with the property {Dini-$\gamma$-diffuse} of a Borel measure $\mu$ on $\O$. This property originates from the concept of diffuse Borel measure in the paper of  Charabati and Zeriahi in \cite{ChaZe24}. It is also used effectively in \cite{HQ23} to establish the continuity of solutions of degenerate complex Hessian equations on Hermitian manifolds. Now we recall this notion from \cite{HQ23}. Readers can explore this concept in more depth through  \cite{ChaZe24} and \cite{HQ23}.    
\begin{definition}\label{dn25}
	\n Let $\O$ be a ball in $\cn$ and $\mu$ be a positive Borel measure on $\O$. Let $\omega\in \Gamma_m(\beta)$ be a smooth real $(1,1)$-form. Measure $\mu$ is said to be 	{Dini-$\gamma$-diffuse} w.r.t. $Cap_{\om,m}(\bullet)$ on $\O$ if there
	exists a function $\gamma : [0, +\infty) \longrightarrow [0, +\infty)$ satisfying the following conditions
	
	\n (i) $\mu(E) \leq Cap_{\om,m}(E) \cdot \gamma [Cap_{\om,m}(E)]$, for every Borel subset $E \subset \O$. 
	
	\n (ii) $\gamma$ is non-decreasing on $(0, b_{\gamma})$, where $0 < b_{\gamma} \leq +\infty$.
	
	\n (iii)$\int\limits_{0}^{b_{\gamma}}\frac{\gamma^{\frac{1}{m}}(t)}{t} dt<+\infty,$ (this condition is called the Dini type condition for $\gamma$)
	where $Cap_{\om,m}(E)$ is a $(\om,m)-$capacity of a Borel subset $E\subset\O$ given by Definition \ref{dn22}.
\end{definition}

\section{Continuous solutions of the Dirichlet problem to  Hessian type equation for $(\om,m)-\beta$-sh functions on a ball in $\cn$}\label{sec3}

The section is devoted to the proof of Theorem \ref{th1.1}. We split the proof into two steps. Firstly, under the assumptions (a) and (b), we will prove the existence of solutions of equation \eqref{eq1.5}. Here, as a traditional approach, we exploit the theory of fixed points. However, in contrast to some previous works, we do not assume that the function $t\mapsto F(t, z)$ is non-decreasing with respect to $t$. Now, we prove the comparison principle in the class $SH_{\om,m}(\Om)\cap L^{\infty}(\Om)$ which is similar to Proposition 2.2 in \cite{KN23b} for $\om$-plurisubharmonic functions. Namely, we need the following result.

\begin{theorem}\label{thm 5.3}
	If $u, v \in SH_{\om,m}(\Om)\cap L^{\infty}(\Om)$, then 
	$$ \ind_{\{u>v\}} ( \omega + dd^c u)^m\w\beta^{n-m} = \ind_{\{u>v\}} ( \omega + dd^c \max(u,v))^m\w\beta^{n-m}. $$
\end{theorem}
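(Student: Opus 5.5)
We prove the identity by reducing it to the locality of the Hessian operator on plurifine open sets, in the style of Bedford--Taylor. Throughout we write $H_{m,\o}(w)=(\o+dd^cw)^m\w\beta^{n-m}$, and we set $w:=\max(u,v)$, which is a bounded $(\o,m)-\beta$-sh function by Proposition \ref{prop:closure-max}(c).

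\textbf{Step 1: continuous case.} Suppose first that $v$ is continuous. Then $U=\{u>v\}$ is not open, but $\{u>v\}=\bigcup_{\epsilon}\{u>v+\epsilon\}$, and we approximate $u$ by the decreasing sequence $u_j$ of smooth $(\o,m)-\beta$-sh functions from Remark 3.19(b) of \cite{GN18}. The set $O_{j}=\{u_j>v\}$ is open and $\max(u_j,v)=u_j$ there, so
$$\ind_{O_j}H_{m,\o}(u_j)=\ind_{O_j}H_{m,\o}(\max(u_j,v)).$$
Equivalently, $\ind_{\{u_j>v\}}\big[H_{m,\o}(u_j)-H_{m,\o}(\max(u_j,v))\big]=0$ as measures, since both sides are given by the expansion
$$\sum_{k}\binom{m}{k}(-1)^{m-k}[dd^c(\cdot+\rho)]^k\w\tau^{m-k}\w\beta^{n-m}$$
in terms of the $m$-sh functions $\cdot+\rho$, and each term is local on open sets.

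To pass to the limit we use that $u_j\downarrow u$ and $\max(u_j,v)\downarrow w$. The Hessian measures of each summand $[dd^c(\cdot+\rho)]^k\w\tau^{m-k}\w\beta^{n-m}$ converge weakly by the continuity under monotone sequences recalled from \cite{GN18}. Quasi-continuity, via Remark \ref{monocap} and \eqref{e2.1}, lets us handle the indicator functions: outside an open set of small capacity, $u$ and $v$ are continuous. On such a compact set $K$, $\{u>v\}\cap K$ is relatively open, and for a continuous $\chi\ge0$ compactly supported in $\{u>v+\epsilon\}$ we obtain
$$\int\chi\, H_{m,\o}(u)=\int\chi\, H_{m,\o}(w).$$
The error from the exceptional set is controlled by $\|u\|_\infty,\|v\|_\infty$ times its capacity. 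This control comes from the Chern--Levine--Nirenberg-type bound $\int_E H_{m,\o}(\cdot)\le C\,Cap_{\om,m}(E)$ for uniformly bounded functions, obtained from Definition \ref{dn22} after rescaling. Letting $\epsilon\to0$ gives the identity on $\{u>v\}$.

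\textbf{Step 2: general $v$.} In general we cannot assume $v$ is continuous, so we run the quasi-continuity argument directly on both functions. Fix $\delta>0$ and choose an open set $G$ with $Cap_{\om,m}(G)<\delta$ such that $u,v$ restricted to $\O\setminus G$ are continuous. Then $\{u>v\}\setminus G$ is open in $\O\setminus G$. Let $u_j, v_j$ be smooth decreasing approximants as in Remark 3.19(b), set $w_j=\max(u_j,v_j)$, and note that on the open set $\{u_j>v_j\}$ we have $H_{m,\o}(u_j)=H_{m,\o}(w_j)$.

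The argument of Bedford--Taylor \cite{BT1} (Corollary 4.3) now transfers verbatim to the $m$-sh functions $u+\rho$ and $v+\rho$, each term being handled separately. The key point is that by Dini's theorem on $\O\setminus G$, $u_j\to u$ and $v_j\to v$ uniformly there, so for large $j$
$$\{u>v+\epsilon\}\setminus G\subset\{u_j>v_j\}.$$
Weak convergence of $\ind_{\O\setminus G}$-localized measures is justified by approximating $\ind_{\{u>v+\epsilon\}}$ restricted to $\O\setminus G$ by continuous functions. Tietze extension produces the needed test functions.

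\textbf{Main obstacle.} The main difficulty is the limiting step, where indicator functions of non-open sets must be tested against weakly converging measures. Because the operator is defined as an alternating sum, the measures $[dd^c(\cdot+\rho)]^k\w\tau^{m-k}\w\beta^{n-m}$ appearing in it are not positive. We therefore write $\tau=\tau^+-\tau^-$ with $\tau^\pm$ positive forms (for instance $A\beta$ and $A\beta-\tau$ with $A$ large), which reduces everything to positive measures of the form $(dd^c\psi)^k\w\alpha_1\w\cdots\w\alpha_{m-k}\w\beta^{n-m}$. For these measures the capacity estimates and monotone convergence hold.
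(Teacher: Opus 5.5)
Your proposal is correct in outline, but it takes a different route from the paper. The paper does no analysis at this point. It expands $H_{m,\omega}$ through $\rho$ and uses Proposition 4.1 of [Sa25] to write $\tau^{m-k}=\sum_j g_jT_j$, where the $g_j$ are smooth real functions and $T_j=dd^cu^j_1\wedge\cdots\wedge dd^cu^j_{m-k}$ are closed products of smooth psh functions. By linearity it then quotes the known identity for the mixed Hessian measures $(dd^c(\cdot+\rho))^k\wedge T_j\wedge\beta^{n-m}$ of the $m$-sh functions $u+\rho$, $v+\rho$ (Lemma 4 of [DE16]). The decomposition exists to replace the non-closed form $\tau$ by closed currents, so that the Bedford--Taylor/B{\l}ocki framework and the cited lemma apply. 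You instead prove the plurifine locality directly, in effect re-proving that lemma in this setting. Your chain is:
quasi-continuity; Dini on $K\setminus G$; the inclusion $(\{u>v+\epsilon\}\cap K)\setminus G\subset\{u_j>v_j\}$ for large $j$ (this needs only $u_j\geq u$ and uniform convergence $v_j\to v$ there); locality on the open set $\{u_j>v_j\}$; a Tietze-extended test function; a uniform capacity bound on $G$; then $\delta\to 0$ and $\epsilon\to 0$. This is self-contained given the tools of [GN18] (Theorem 3.4, quasi-continuity, \eqref{e2.1}) and avoids [Sa25] and [DE16]. The paper's route is far shorter but outsources all the analysis.

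Some points in your write-up need correcting.
Step 1 is superfluous, and ``$\chi$ compactly supported in $\{u>v+\epsilon\}$'' is not meaningful since that set is not open; Step 2 is what does the work.

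Your ``main obstacle'' is misdiagnosed. Since $dd^c\rho\geq\omega$, we have $\tau\geq 0$, so each summand $(dd^c(\cdot+\rho))^k\wedge\tau^{m-k}\wedge\beta^{n-m}$ is already a positive measure; only the binomial signs alternate. The real issue in that expansion is that $\tau$ need not be closed. For your argument this is harmless, because $(dd^c\psi)^k\wedge\beta^{n-m}$ is a closed current of order zero and can be wedged with any smooth form. In fact you need no expansion at all: run the argument directly on the positive measures $H_{m,\omega}(u_j)$ and $H_{m,\omega}(\max(u_j,v_j))$.

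Finally, the bound $\int_E H_{m,\omega}(\psi)\leq C\,Cap_{\om,m}(E)$ ``after rescaling'' needs justification, since $\psi\mapsto\psi/c$ does not preserve $SH_{\om,m}(\Om)$ in general. It does work for $c\geq 1$: because $\omega\in\Gamma_m(\beta)$, one gets $\psi/c\in SH_{\om,m}(\Om)$ and $H_{m,\omega}(\psi/c)\geq c^{-m}H_{m,\omega}(\psi)$.
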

\begin{proof}
	Let $\rho$ be a smooth strictly  psh function on $\O$ such that $dd^c\rho\geq\om$ on $\O$.  Write 
	$$H_{m,\o}(u)  = \sum_{k=0}^n \binom{m}{k} (-1)^{m-k} ( dd^c (u+\rho))^k \wedge (dd^c \rho - \omega)^{m-k}\w\beta^{n-m},$$
	$$H_{m,\o}( \max(u,v))  = \sum_{k=0}^n \binom{m}{k} (-1)^{m-k} ( dd^c ( \max(u,v)+\rho))^k \wedge (dd^c \rho - \omega)^{m-k}\w\beta^{n-m},$$
	
	\n	Note that $\omega$ is a smooth real $(1,1)$-form, we deduce $\tau^{m-k}=(dd^c \rho - \omega)^{m-k}$ is a smooth real $(m-k,m-k)$-form. Hence, according to Proposition 4.1 in \cite{Sa25} we can write
	\begin{equation}\label{tau1}
	\tau^{m-k}= (dd^c \rho - \omega)^{m-k} = \sum\limits_{j\in J} f_j T_j, 
	\end{equation} 
	where $J$ is a finite set, $(f_j)_{j\in J}$ are smooth functions with complex values  and $T_j=dd^cu^j_1\w\cdots\w dd^cu^j_{m-k}$  where, for every $j\in J$ and every $i=1,\cdots,m-k, u_i^j$ is a smooth negative plurisubharmonic function defined in a neighborhood of $\overline{\Omega}$. Note that, $\tau^{m-k}$ and $T_j$ are smooth real forms. Hence, we have
	\begin{equation}\label{tau2}
	\tau^{m-k}= \overline{\tau^{m-k}}=\overline{\sum\limits_{j\in J} f_j T_j}=\sum\limits_{j\in J} \overline{f_j}.   \overline{T_j}= \sum\limits_{j\in J} \overline{f_j} T_j
	\end{equation}
Combining the equations  \eqref{tau1} and \eqref{tau2}, we infer that
$\tau^{m-k}=\sum\limits_{j\in J} \frac{f_j + \overline{f_j}}{2}T_j = \sum\limits_{j\in J} g_j T_j ,$
where $(g_j)_{j\in J}$ are smooth functions with real value.
	 By linearity, it suffices to prove that
	$$\ind_{\{ u > v\}} ( dd^c (u+\rho))^k \wedge T_j\w\beta^{n-m}= \ind_{\{ u > v\}} (  dd^c \max(u+\rho,v+\rho))^k \wedge T_j \w\beta^{n-m}.  $$
	By Lemma 4 in \cite{DE16} the above equality holds. The proof is complete.
\end{proof}

The following corollary is an extension of a result due to Demailly (see \cite[Proposition 6.11]{Dem89} in the case when $m=n$ and $\omega=0$).
\begin{corollary}\label{cor 5.5}
	Let $u, v \in SH_{\om,m}(\Om)\cap L^{\infty}(\Om)$. Then we have 
	$$ ( \omega + dd^c \max(u,v))^m\w\beta^{n-m} \geq \ind_{\{u> v \}} ( \omega + dd^c u)^m\w\beta^{n-m} + \ind_{\{ u \leq v \}} ( \omega + dd^c v)^m\w\beta^{n-m}. $$
\end{corollary}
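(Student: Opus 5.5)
The plan is the classical Demailly argument: split $\Omega$ into three Borel pieces and use Theorem \ref{thm 5.3} twice. Put $w=\max(u,v)\in SH_{\om,m}(\Om)\cap L^\infty(\Om)$; this membership holds by Proposition \ref{prop:closure-max}(c). Decompose $\Omega=\{u>v\}\cup\{u<v\}\cup\{u=v\}$. On $\{u>v\}$, Theorem \ref{thm 5.3} gives directly
$$\ind_{\{u>v\}}H_{m,\o}(w)=\ind_{\{u>v\}}H_{m,\o}(u).$$
On $\{v>u\}$, apply Theorem \ref{thm 5.3} with the roles of $u$ and $v$ exchanged to obtain
$$\ind_{\{v>u\}}H_{m,\o}(w)=\ind_{\{v>u\}}H_{m,\o}(v).$$
It therefore remains to show
$$\ind_{\{u=v\}}H_{m,\o}(w)\geq \ind_{\{u=v\}}H_{m,\o}(v).$$

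For the contact set, perturb $v$ by a small constant. For $\varepsilon>0$ set $w_\varepsilon=\max(u,v+\varepsilon)$; since adding a constant does not change $dd^c$, $H_{m,\o}(v+\varepsilon)=H_{m,\o}(v)$. The set $\{u=v\}$ is contained in $\{v+\varepsilon>u\}$, so Theorem \ref{thm 5.3} applied to $v+\varepsilon$ and $u$ gives
$$\ind_{\{u=v\}}H_{m,\o}(w_\varepsilon)=\ind_{\{u=v\}}H_{m,\o}(v).$$
Combining the three pieces yields the clean identity
$$H_{m,\o}(w_\varepsilon)\geq \ind_{\{u>v+\varepsilon\}}H_{m,\o}(u)+\ind_{\{u\le v\}}H_{m,\o}(v)$$
as measures. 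Here both sides are positive, and a dropped piece such as the strip $\{v<u\le v+\varepsilon\}$ only loses positive mass.

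Now let $\varepsilon\downarrow0$. Then $w_\varepsilon$ decreases to $w$, and the functions are uniformly bounded. By continuity of $H_{m,\o}$ under monotone sequences (from \cite{GN18}), $H_{m,\o}(w_\varepsilon)\to H_{m,\o}(w)$ weakly. On the right-hand side, $\ind_{\{u>v+\varepsilon\}}$ increases to $\ind_{\{u>v\}}$, so the right-hand side increases to $\ind_{\{u>v\}}H_{m,\o}(u)+\ind_{\{u\le v\}}H_{m,\o}(v)$ by monotone convergence. Pair both sides with a nonnegative test function $\va\in C_0^\infty(\O)$ and pass to the limit. The left side converges by weak convergence and the right side by monotone convergence, which gives the claimed inequality.

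The main point of care is the limit step: the inequality is preserved because the left side converges weakly while the right side converges monotonically. Pairing only with continuous compactly supported $\va\ge0$ is exactly what makes this legitimate. A secondary subtlety is that Theorem \ref{thm 5.3} needs both functions to be bounded and $(\o,m)-\beta$-sh; this holds for $v+\varepsilon$ because constants are $(\o,m)$-neutral.
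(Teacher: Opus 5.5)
Your proof is correct and is essentially the paper's argument: Theorem \ref{thm 5.3} on the strict sets, the perturbation $\max(u,v+\varepsilon)$, and the limit $\varepsilon\searrow 0$, using monotone continuity of $H_{m,\o}$ (Theorem 3.4 in \cite{GN18}) on the left, monotone convergence on the right, and testing against nonnegative $\va\in C_0^\infty(\O)$. The only difference is the bookkeeping on the contact set: the paper chooses $\varepsilon_j$ outside the countable set of $t$ with $H_{m,\o}(v)(\{u=v+t\})>0$ so that it can write the $v$-term with $\ind_{\{u\le v+\varepsilon_j\}}$, whereas you use $\{u\le v\}\subset\{u<v+\varepsilon\}$ and discard the strip $\{v<u\le v+\varepsilon\}$, which keeps the $v$-term independent of $\varepsilon$ and makes that countability step unnecessary.
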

\begin{proof}
	By Theorem \ref{thm 5.3}	we have 
	\begin{align*}
	&( \omega + dd^c \max(u,v))^m\w\beta^{n-m}\\
	& \geq  \ind_{\{u>v\}} ( \omega + dd^c \max(u,v))^m\w\beta^{n-m} +  \ind_{\{u < v\}} ( \omega + dd^c \max(u,v))^m\w\beta^{n-m}\\
	& \geq  \ind_{\{u>v\}} ( \omega + dd^c u)^m\w\beta^{n-m} +  \ind_{\{u < v\}} ( \omega + dd^c v)^m\w\beta^{n-m}. 
	\end{align*}
	If $[( \omega + dd^c v)^m\w\beta^{n-m}] (\{u = v\}) = 0$, then the result follows. 
	In the case when $$[( \omega + dd^c v)^m\w\beta^{n-m}] (\{u = v\}) \ne 0,$$	since $(\omega + dd^c v)^m\w\beta^{n-m}$ 
	vanishes on $m$-polar sets,	the proof in \cite{HP17}(see Proposition 5.2) shows that 
	$$ [(\omega + dd^c v)^m\w\beta^{n-m}](\{ u = v + t \}) = 0, \; \;  \forall t \in \mathbb{R}\setminus I_{\mu}, $$
	where $I_{\mu}$ is at most countable. Take $\varepsilon_j \in \mathbb{R}\setminus I_{\mu}$ and  $\varepsilon_j \searrow 0$.   We have 
	
	$$[(\omega + dd^c v)^m\w\beta^{n-m}](\{ u = v + \varepsilon_j \}) = 0.$$
	This implies that	
	\begin{align*} ( \omega + dd^c \max(u,v+ \varepsilon_j))^m\w\beta^{n-m} 
	\geq \ind_{\{u> v+ \varepsilon_j \}} ( \omega + dd^c u)^m\w\beta^{n-m} + \ind_{\{ u \leq v+ \varepsilon_j \}} ( \omega + dd^c v)^m\w\beta^{n-m}. 
	\end{align*}
	
	\noindent	Let $\varepsilon_j \rightarrow 0$, according to Theorem 3.4 in \cite{GN18} and the Lebesgue Monotone Convergence Theorem, we get the desired conclusion.
\end{proof}

We need a version of the comparision principle. 
See  Proposition 2.2 in \cite{KN23b} for an analogous result for $\om$-plurisubharmonic  functions.

\begin{proposition}\label{md4}
	Let $\nu \ge \mu$ be positive Radon measures on $\Om.$ 
	Assume that	$ t \mapsto F(t,z)$ is a non-decreasing function in $t$ 
	for all $z \in \Om \setminus Z,$ where $Z \subset \Om$ is a Borel set with $C_m (Z)=0.$
	Let
	$u,v\in SH_{\om,m}(\Om)\cap L^{\infty}(\Om)$ be functions satisfying the following conditions: 
	
	\n 
	(i)
	$\liminf\limits_{z\to\pa \Om}(u-v)(z)\geq 0;$
	
	\n
	(ii) $H_{m,\o}(u)=F(u,z)\mu, H_{m,\o}(v)=\tilde F(v,z)\nu,$ where $\tilde F \ge F$ is a
	measurable function on $\Om.$
	
	\n Then $u\geq v$ on $\Om.$
\end{proposition}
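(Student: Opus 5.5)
We follow the standard route of Proposition 2.2 in \cite{KN23b}, adapted to the Hessian setting. Argue by contradiction: suppose the set $\{u<v\}$ is nonempty. Since $\liminf_{z\to\partial\Om}(u-v)\geq 0$, for small $\varepsilon>0$ the set $U_\varepsilon=\{u<v-\varepsilon\}$ is relatively compact in $\Om$. For suitably chosen $\varepsilon$ it is also nonempty, with positive Lebesgue measure, by Proposition \ref{prop:closure-max}(d). We then aim to show that $\max(u,v-\varepsilon)$ and $u$ produce a contradiction with the comparison principle, Theorem \ref{comparison}.

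The key measure inequality comes first. On $\{u<v-\varepsilon\}$, Theorem \ref{thm 5.3} (applied with the roles $v-\varepsilon>u$) gives
$$\ind_{\{v-\varepsilon>u\}}H_{m,\o}(\max(u,v-\varepsilon))=\ind_{\{v-\varepsilon>u\}}H_{m,\o}(v-\varepsilon).$$
Since $\omega$ is fixed, $H_{m,\o}(v-\varepsilon)=H_{m,\o}(v)$. Off the $C_m$-null set $Z$, monotonicity of $F$ in $t$ gives, pointwise on $\{u<v-\varepsilon\}$,
$$\tilde F(v,z)\,\nu\;\geq\; F(v,z)\,\mu\;\geq\; F(u,z)\,\mu .$$
Here we use $\tilde F\ge F$, $\nu\ge\mu$, and $F\geq 0$. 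The set $Z$ is $m$-polar by Remark \ref{remark}, and $H_{m,\o}(u)$ puts no mass on $m$-polar sets. Hence $H_{m,\o}(v)\geq H_{m,\o}(u)$ as measures restricted to $\{u<v-\varepsilon\}$. A technical point: we need $\mu(Z)$ to be harmless in the inequality. It suffices that the measure $F(u,z)\mu=H_{m,\o}(u)$ vanishes on $Z$, which holds because $Z$ is $m$-polar.

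Next we set $w=\max(u,v-\varepsilon)$. Then $w$ is bounded and $(\o,m)$-$\beta$-sh by Proposition \ref{prop:closure-max}(c), and $w=u$ near $\partial\Om$. By Corollary \ref{cor 5.5} with the measure inequality above,
$$H_{m,\o}(w)\geq \ind_{\{v-\varepsilon>u\}}H_{m,\o}(v)+\ind_{\{v-\varepsilon\le u\}}H_{m,\o}(u)\geq H_{m,\o}(u).$$
Since $\liminf_{z\to\partial\Om}(u-w)\ge 0$ (indeed $u=w$ near the boundary), Theorem \ref{comparison} yields $u\geq w\geq v-\varepsilon$ on $\Om$. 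Letting $\varepsilon\searrow 0$ gives $u\ge v$, which is the desired contradiction; in fact this is a direct proof, and no contradiction setup is needed.

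The main obstacle is justifying the inequality $H_{m,\o}(v)\ge H_{m,\o}(u)$ on $\{u<v-\varepsilon\}$ rigorously. This requires comparing $F(v,z)\mu$ with $F(u,z)\mu$ only $\mu$-a.e. off $Z$, and also checking that $\tilde F(v,z)\nu\ge F(v,z)\mu$ as measures. The latter needs $\tilde F\ge 0$, which follows from $\tilde F\ge F\ge0$. The exceptional set $Z$ must also be handled through $m$-polarity, using Remark \ref{remark} together with the fact that $H_{m,\o}$ of bounded functions charges no $m$-polar set. If instead Theorem \ref{comparison} is used in the form requiring the inequality on all of $\Om$, the construction of $w$ above supplies exactly that global inequality.
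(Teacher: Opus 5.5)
Your proof is correct and is essentially the paper's argument. Both apply Corollary \ref{cor 5.5} to a maximum, then use $\tilde F\ge F\ge 0$, $\nu\ge\mu$ and monotonicity of $F$ in $t$ to get $H_{m,\omega}(\max)\ge H_{m,\omega}(u)$, and conclude with Theorem \ref{comparison}. The paper works with $\max(u,v)$ directly, since Theorem \ref{comparison} only needs $\liminf_{z\to\partial\Omega}(u-\max(u,v))\ge 0$, so your $\varepsilon$-shift and the preliminary appeal to Theorem \ref{thm 5.3} are harmless but unnecessary. Your explicit handling of $Z$ (it is $m$-polar, so $F(u,z)\mu=H_{m,\omega}(u)$ does not charge it) fills in a point the paper leaves implicit.
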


\begin{proof}

	It follows from Corollary \ref{cor 5.5} that 
	\begin{align*}
	(\omega + dd^c \max(u,v))^m\w\beta^{n-m}
	&\geq \ind_{\{u> v \}} ( \omega + dd^c u)^m\w\beta^{n-m} + \ind_{\{ u \leq v \}} ( \omega + dd^c v)^m\w\beta^{n-m} \\
	&= \ind_{\{u> v \}} F(u,.) d\mu  + \ind_{\{ u \leq v \}} \tilde{F}(v,.) d\nu \\
	&\geq \ind_{\{u> v \}} F(u,.) d\mu  + \ind_{\{ u \leq v \}} F(v,.) d\mu \\
	&= F(\max(u,v),.) d\mu \\
	&\geq F(u,.) d\mu = (\omega + dd^c u)^m\w\beta^{n-m},
	\end{align*}
	where the last inequality follows from the fact that the function $F$ is non-deceasing in the first variable. According to 
	Theorem \ref{comparison}, we infer that  $u \geq v$. The proof is complete.

\end{proof}
\n	Now using the hypotheses (a) and (b) of Theorem \ref{th1.1} we show the existence of bounded $(\om,m)-\beta$-sh solutions of the equation \eqref{eq1.5}. 
\subsection{Proof of the existence of solutions of the equation \eqref{eq1.5}}
It follows from the hypotheses (b) that
 $\mu$ puts no mass on $m-$polar subsets of $\Om.$ 
Next, by Theorem {\bf 3.15} in \cite{GN18} there exists $h\in SH_{\om,m}(\O)\cap C^0(\overline{\Om})$ such that
\begin{equation*} \label{eqh}
H_{m,\o}(h)=0,\, h=\phi \ \text{on}\ \pa \Om.
\end{equation*}

\n Note that $\lim\limits_{z\to x}( v+h)(z)=\phi(x)$. On the other hand, we have $H_{m,\om}(v+h)\geq 0= H_{m,\om}(h)$. So using Theorem \ref{comparison} we obtain $v+h \le h$ on $\Om.$ Set
$$\mathcal{A}=\{\chi\in SH_{\om, m}(\O)\cap L^{\infty}(\bar{\O}): v+h\leq \chi\leq h\}.$$

\n Note that $h\in\mathcal{A}$ then $\mathcal{A}\ne\emptyset$. It is easy to see that $\mathcal{A}$ is a convex subset in $SH_{\om,m}(\O)$. Moreover, $\mathcal{A}$ is a compact subset in $L^1(\O, d\mu)$. Indeed, assume that $\{\chi_j\}_{j\geq 1}$ is a sequence in $\mathcal{A}$. Then for all $j\geq 1$ we have
\begin{equation}\label{eq3.2}
|\chi_j|\leq \max\Bigl(|h|, |v+h|\Bigl)\leq |h|+|v|+|h|.
\end{equation}
Hence, $\int\limits_{\O}|\chi_j|dV_{2n}\leq \int\limits_{\O}|h|+ \int\limits_{\O}(|v|+ |h|)<+\infty,$ for all $j\geq 1$ because $h,v\in L^{\infty}(\bar{\O})$. It follows that $\{\chi_j\}_{j\geq 1}$ is uniformly bounded in $L^1(\O,dV_{2n})$. By Proposition {\bf 9.12} in \cite{GN18} we can take a subsequence of the sequence $\{\chi_j\}_{j\geq 1}$ which we also still denote it by $\{\chi_j\}$ such that $\chi_j\longrightarrow \chi\in SH_{\om,m}(\O)$ in $L^1(\O, dV_{2n})$. On the other hand, by \eqref{eq3.2} it follows that
$$\sup\limits_{j \ge 1}\int\limits_{0}-\chi_j d\mu<\infty,$$

\n by Lemma \ref{bd1}, we achieve that $\chi_j$ is convergent to $\chi$ in $L^1(\O,d\mu)$ as we wanted.\\
Now for $\chi\in \mathcal{A}$ we have
$$F(\chi,z)d\mu\leq G(z) d\mu\leq H_{m}(v),$$

\n by Theorem \ref{sub} there exists unique $g\in SH_{\om,m}(\O)\cap L^{\infty}(\O)$, $\lim\limits_{z\to x} g(z)=\phi(x)$ such that 
$$H_{m,\om}(g)= F(\chi,z)d\mu.$$  

\n Since $\lim\limits_{z\to x} g(z)=\phi(x)$ and $\phi\in C^0(\partial{\O})$ then $g\in L^{\infty}(\bar{\O})$.\\
Moreover, we also have
$$H_{m,\omega}(v+h)\geq H_{m}(v)\geq H_{m,\om}(g)\geq 0= H_{m,\om}(h)$$
By Theorem \ref{comparison} we get that $v+h\leq g\leq h$ on $\O$. Thus it follows $g\in\mathcal{A}$. Now we define a map $T:\mathcal{A} \longrightarrow\mathcal{A}$ by putting $\mathcal{A}\ni \chi\mapsto T(\chi)=g\in\mathcal{A}$. We will prove that $T$ is continuous. Let $\{u_j\}\subset \mathcal{A}$ be a sequence such that $u_j\to u\in \mathcal{A}$ in $L^1(\Om,d\mu).$ Hence, we can assume that $u_j \to u$ almost everywhere with respect $d\mu$. Set $g=T(u)$ and $g_j=T(u_j).$ 
Now for $z \in \Omega,$ we define  the following sequences of non-negative uniformly bounded measurable functions
$$\psi^1_j (z):= \inf_{k \ge j} F(u_k (z),z),
\psi^2_j (z):= \sup_{k \ge j} F(u_k (z),z).$$
Then from the definitions of $\psi^1_j (z)$ and $\psi^2_j (z)$ we deduce that \\
\n 
(i) $0 \le \psi^1_j (z) \le  F(u_j (z),z) \le \psi^2_j (z) \le G(z)$ for $j \ge 1$ and $z\in\O$.

\n Furthermore, because $F(t,z)$ is continuous function in the first variable, we have:\\
\n 
(ii) $\lim\limits_{j \to \infty} \psi^1_j (z)=\lim\limits_{j \to \infty} \psi^2_j (z)= F(u(z),z)$ almost everywhere with respect to the measure $d\mu$.

\n Since $\psi^1_j (z)d\mu\leq F(u_j(z),z)d\mu\leq G(z)d\mu\leq H_m(v)$ by Theorem \ref{sub}, we can find $v^1_j(z)\in SH_{\om,m}(\O)\cap L^{\infty}(\O)$ such that
$$H_{m,\om}(v^1_j)= \psi^1_j (z)d\mu, \lim\limits_{z\to x} v^1_j(z)=\phi(x),$$

\n for all $x\in\pa\O$. Similarly, we can find $v^2_j(z)\in SH_{\om,m}(\O)\cap L^{\infty}(\O)$ such that
$$H_{m,\om}(v^2_j)= \psi^2_j (z)d\mu, \lim\limits_{z\to x} v^2_j(z)=\phi(x),$$

\n for all $x\in\pa\O$.\\
On the other hand, since $H_{m,\om} (h+v)\geq H_m(v)\geq H_{m,\om}(v^1_j)\geq H_{m,\omega}(h)=0$ for all $j\geq 1$, by Theorem \ref{comparison} we get that
$$v+ h\leq v^1_j\leq h.$$
Similarly, we also have $$v+ h\leq v^2_j\leq h,$$

\n for all $j\geq 1$ on $\O$. Moreover, by the definitions of $\psi^1_j$ and $\psi^2_j$ we note that $\psi^1_j$ is an increasing sequence and $\psi^2_j$ is a decreasing sequence. We further have $H_{m,\om}(v^1_j)=\psi^1_j\mu\leq \psi^1_{j+1}\mu=H_{m,\om}(v^1_{j+1})$ then by theorem \ref{comparison} it follows that $v^1_j\geq v^1_{j+1}$ for all $j\geq 1$.  Hence, $v^1_j\searrow v^1$ and by Proposition \ref{prop:closure-max}, $v^1\in SH_{\om, m}(\Omega)$. We also have  $v+h\leq v^1\leq h$. Hence, $\lim\limits_{z\to x} v^1(z)=\phi(x)$ for all $x\in\partial{\O}$. This yields that $v^1\in SH_{\om,m}(\O)\cap L^{\infty}(\bar{\O})$. Hence, we claim that $v^1\in\mathcal{A}$.\\
On the other hand, 
$$H_{m,\om}(v^2_j)=\psi^2_j\mu\geq \psi^2_{j+1}\mu= H_{m,\om}(v^2_{j+1}),$$

\n and $\lim\inf\limits_{z\to \partial{\O}}(v^2_{j+1}- v^2_j)(z)\geq 0$, by Theorem \ref{comparison} we infer that $v^2_j\leq v^2_{j+1}$ on $\O$ for all $j\geq 1$. Thus it follows that $v^2_j \uparrow (v^2)^*$ outside an $m$-polar set with $(v^2)^*\in SH_{\om,m}(\O)$ and $v+h\leq (v^2)^*\leq h$ on $\O$. Hence, we achieve that $(v^2)^*\in\mathcal{A}$. Furthermore, in view of (i) and applying theorem \ref{comparison} we also have
\begin{equation}\label{eq3.5}
v^1_j\geq T(u_j)\geq v^2_j.
\end{equation}
Next we use (ii) to get 
$$H_{m,\o} (v^1_j) \to F (u,z) \mu, H_{m,\o} (v^2_j) \to F(u,z)\mu.$$

\n We also have
$$H_{m,\om}(v^1_j)=\psi^1_j(z)\mu\leq F(u_j(z),z)\mu=H_{m,\om}(g_j)
\leq \psi^2_j\mu= H_{m,\om}(v^2_j)$$

\n and $\lim\limits_{z\to x} v^1_j(z)=\phi(x)=\lim\limits_{z\to x}g_j(x)=\lim\limits_{z\to x}v^2_j(z)$ for all $x\in\partial\Omega$. By applying Theorem \ref{comparison} we infer that
$$v^1_j\geq g_j=T(u_j)\geq v^2_j \ \text{on}\ \O.$$

\n Because $v^1_j\searrow v^1$ as $j\to\infty$ then applying theorem {\bf 3.4} in \cite{GN18} we have $H_{m,\om}(v^1_j)\longrightarrow H_{m,\om}(v^1)$. Similarly, since $v^2_j\nearrow (v^2)^*$, by Remark \ref{monocap}  we get that $H_{m,\om}(v^2_j)\longrightarrow H_{m,\om}((v^2)^*)$. Hence, we have
$$H_{m,\om}(v^1)= H_{m,\om}((v^2)^*)= F(u(z),z)\mu= H_{m,\om}(T(u)).$$

\n However, $\lim\limits_{z\to x}v^1(z)=\phi(x)= \lim\limits_{z\to x}(v^2)^*(z)= \lim\limits_{z\to x}T(u)(z)$ for all $x\in\partial\Omega$ then by Theorem \ref{comparison} we have
$$v^1=(v^2)^*= T(u),$$

\n on $\O$. Now in \eqref{eq3.5}, by letting $j\to \infty$ it follows that 
$$v_1\geq \lim\limits_{j\to\infty}T(u_j)\geq (v^2)^*.$$

\n Therefore, $T(u_j)\longrightarrow T(u)$ outside an $m$-polar subset of $\O$. Since $\mu$ puts no mass on $m$-polar sets, we may apply Lebesgue dominated convergence theorem
to achieve that $T(u_j) \to T(u)$  in $L^1 (\Omega, d\mu)$. Thus $T: \mathcal A \to \mathcal A$ is continuous. 	
Now the Schauder fixed - point theorem implies that there exists $\tilde{u}\in\mathcal{A}$ such that $T(\tilde{u})=\tilde{u}.$
Thus $\tilde{u}$ is a solution of equation \eqref{eq1.5}. 

\n In the case, $ t \mapsto F(t,z)$ is non-decreasing for every 
$z \in \O \setminus Y$ with $C_m (Y)=0,$ then the uniqueness of $\tilde{u}$ follows directly from Proposition \ref{md4} (with $F=\tilde F, \mu=\nu$).	  

\subsection{Proof of the continuity of solutions of the equation \eqref{1.7}} 

\n Let $u\in SH_{\om,m}(\O)\cap L^{\infty}(\overline{\O})$ be a solution of Equation \eqref{eq1.5}. In the final part of this paper we will prove the continuity of $u$ under the assumption that the measure $\mu$ is {$\gamma$-Dini-diffuse} and the function $F(t,z)$ satisfying the condition $$0 < F(t,z)\leq C \quad  \text{for all} (t,z)\in\mathbb{R}\times\O.$$ Without loss of generality, we may assume that $C=1$. Hence, we have $0<F(t,z)\leq 1$ for all $(t,z)\in\mathbb{R}\times\O$. Now we need the following result, which is inspired by Lemma 1.6 in \cite{ChaZe24} for the class of $m$-subharmonic functions. For $\phi\in C^0(\pa\O)$, by $SH_{\om,m}^{\phi}(\O)$ we denote the set of functions $\chi\in SH_{\om,m}(\O)\cap L^{\infty}(\O)$ such that $\chi=\phi$ on $\pa\O$, i.e for any $x\in\pa\O$, $\lim\limits_{\O\ni z \to x}\chi(z)=\phi(x)$.
\begin{lemma}\label{bd34}
	Let $\phi\in C^0(\pa\O)$ and $\chi\in SH_{\om,m}^{\phi}(\O)$. Then there exists a decreasing sequence $\{\chi_j\}_{j\geq 1}$ of continuous functions in $SH_{\om,m}^{\phi}(\O)\cap C^0(\overline{\O})$ which converges to $\chi$ point-wise on $\O$.	
\end{lemma}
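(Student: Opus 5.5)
Following Lemma 1.6 in \cite{ChaZe24}, we first produce continuous functions above $\chi$ with the correct boundary values, and then make them $(\o,m)-\beta$-sh through an envelope construction. By Remark 3.19(b) in \cite{GN18}, there is a decreasing sequence of smooth $(\o,m)-\beta$-sh functions $\tilde\chi_j$ converging to $\chi$ on $\O$. Strictly speaking these are defined only on slightly smaller balls, but since $\O$ is a ball we can first dilate, $\chi_r(z)=\chi(rz)$ with $r<1$, after a small correction of $\o$. To sidestep this technicality we instead work with continuous majorants. Let $h\in SH_{\om,m}(\O)\cap C^0(\overline{\O})$ be the function from Theorem 3.15 in \cite{GN18}, i.e.\ the maximal function with $h=\phi$ on $\pa\O$. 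By Theorem \ref{comparison}, $\chi\le h$ on $\O$.

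Since $\chi$ is upper semicontinuous on $\O$ and bounded above by the continuous $h$, we choose continuous functions $f_j$ on $\overline{\O}$ with $f_j\searrow \chi$ pointwise on $\O$, $f_j\le h$, and $f_j=\phi$ on $\pa\O$. For instance, take the standard inf-convolution regularizations $\min(h,\inf_y(\chi(y)+j|z-y|))$ of $\chi$ extended by $\limsup$ to $\overline{\O}$; the extension equals $\phi$ on $\pa\O$ because $\chi=\phi$ there. We then define the envelope
$$\chi_j:=\Big(\sup\{w\in SH_{\om,m}(\O): w\le f_j \text{ on } \O\}\Big)^*.$$
By Proposition \ref{prop:closure-max}(e), $\chi_j\in SH_{\om,m}(\O)$. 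Moreover $\chi\le\chi_j\le f_j$, where the second inequality comes from continuity of $f_j$ together with $w^*\le f_j$. The sequence $\chi_j$ is decreasing since $f_j$ is, and $\chi\le\lim\chi_j\le\lim f_j=\chi$, so $\chi_j\searrow\chi$ pointwise. For the boundary values we have $\chi_j\le f_j\to\phi$. For the lower bound, a barrier such as $\chi$ itself, or $v+h$ from Section 3.1, gives $\liminf_{z\to x}\chi_j(z)\ge\phi(x)$. Hence $\chi_j\in SH^{\phi}_{\om,m}(\O)$.

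The main obstacle is proving that the envelope $\chi_j$ is continuous on $\overline{\O}$. We would follow the Walsh-type argument. Lower semicontinuity at the boundary follows from the barriers just described. In the interior we exploit translation invariance of the flat metric $\beta$ on the ball. For small $a\in\cn$ and a shrunk domain, $w(\cdot+a)$ is $(\o_a,m)-\beta$-sh with $\o_a=\o(\cdot+a)$, and $\|\o_a-\o\|=O(|a|)$. Because $\o\in\Gamma_m(\beta)$ and $\rho$ is strictly psh, adding $C|a|(\rho-\sup\rho)$ absorbs this error, i.e.\ $w(\cdot+a)+C|a|(\rho-\sup\rho)$ is again $(\o,m)-\beta$-sh. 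Using the uniform continuity of $f_j$ and of $h$ near $\pa\O$, we glue this translate with $\chi_j$ via $\max$ (Proposition \ref{prop:closure-max}(c)) to get a competitor $\le f_j+\epsilon$. This yields $\chi_j(z+a)-\epsilon'\le\chi_j(z)$, hence continuity. Should this gluing near the boundary fail to work cleanly, the fallback is to solve the Dirichlet problem with continuous data from \cite{GN18} on the ball with boundary value $f_j|_{\pa\O}$ and argue in the same way.
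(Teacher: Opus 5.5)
Your proposal follows the paper's strategy: continuous majorants $f_j\searrow\chi$ with $f_j=\phi$ on $\pa\O$, the envelope $\sup\{w\in SH_{\om,m}(\O): w\le f_j\}$, and a Walsh-type translation and gluing argument for continuity. The paper only cites that last step from Lemma 3.1 of \cite{BeZe}. Your sketch of it is sound: the collar estimate near $\pa\O$ comes from $\chi\le\chi_j\le f_j$ together with compactness of $\pa\O$. It does not come from continuity of $h$, which is an upper barrier only.

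There is one slip you must fix. The inf-convolution $\inf_y(\chi(y)+j|z-y|)$ lies below $\chi$ and increases in $j$, so with it $\chi\le\chi_j$ fails and the construction collapses. Use instead the sup-convolution $s_j(z)=\sup_{y\in\overline{\O}}(\tilde\chi(y)-j|z-y|)$ of the u.s.c.\ extension $\tilde\chi$ (equal to $\phi$ on $\pa\O$), and set $f_j=\min(h,s_j)$. Then $f_j$ is continuous, $f_j\ge\chi$, and $f_j\searrow\min(h,\tilde\chi)=\tilde\chi$. Its boundary value is $\phi$ because $h=\phi\le s_j$ on $\pa\O$, not because $\chi=\phi$ there.

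Your use of the maximal function $h$, with $\chi\le h$ by Theorem~\ref{comparison}, is better than the paper's harmonic extension $G$ of $\phi$. An $(\o,m)-\beta$-sh function need not lie below $G$. For example, take $\o=\beta$ and $\chi=-\|z\|^2$ on a ball of radius $r$ centred at $0$; then $G\equiv -r^2<0=\chi(0)$. So the paper's $a_j=\min(h_j,G)$ need not decrease to $\chi$, and your version repairs this.
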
 
\begin{proof}
	Take a decreasing sequence of continuous functions $\{ h_j\}_{j\geq 1}$ on $\overline{\O}$ which is convergent to $\chi$ on $\overline{\O}$. We can arrange so that $h_j=\phi$ on $\pa\O$. Indeed, by $\O$ is a ball in $\cn$ then there exists a harmonic function $G$ on $\O$ such that $G=\phi$ on $\pa\O$. Because $\chi\in SH_{\om,m}(\O)$ and for all $x\in\pa\O$, $\lim\limits_{\O\ni z\to x}\chi(z)=\phi(x)$ and $G|_{\pa\O}=\phi$ then it follows that $\chi\leq G$ on $\overline{\O}$. Put $a_j(z)=\min\{h_j(z), G(z)\}, z\in\overline{\O}$. Then $a_j$ is continuous on $\overline{\O}$, $\{a_j\}$ is a decreasing sequence and $a_j\searrow \chi$ on $\overline{\O}$. Moreover, from $h_j|_{\pa\O}\geq \chi|_{\pa\O}=\phi$ then it is not difficult to see that $a_j|_{\pa\O}= \phi$. Next, put 
	$$ \chi_j=\sup\{v\in SH_{\om,m}(\O): v\leq a_j,\\ \text{on $\overline{\O}$}\}.$$
	
	\n Then for all $j\geq 1$ we have $\chi_j\geq \chi$ on $\O$. On the other hand, by (e) of Proposition \ref{prop:closure-max} it implies that $\chi_j^{*}\in SH_{\om,m}(\O)$, $\chi_j^{*}\leq a_j$ on $\O$. Hence, $\chi_j^{*}= \chi_j$ on $\O$ and, therefore, $\chi_j\in SH_{\om,m}(\O)$. Since $a_j\searrow \chi$ then $\chi_j\searrow \chi$ on $\O$. It is clear that $\chi\leq \chi_j\leq a_j$ on $\O$ then it follows that $\lim\limits_{\O\ni z\to x} \chi_j(z)= \phi(x)$ for all $x\in\pa\O$. By using arguments as in the proof of Lemma 3.1 in \cite{BeZe}, we claim that all $\chi_j$ for $j\geq 1$ are continuous on $\O$. The proof of Lemma \ref{bd34} is complete.
\end{proof}

\n Now by using Lemma 14 in \cite{HQ23} together with Lemma \ref{bd34} we will complete the proof of the continuity of the solution $u$ of equation \eqref{eq1.5}. From equation \eqref{eq1.5} we know that $u\in SH_{\om,m}(\O)\cap L^{\infty}(\overline{\O})$ and $\lim\limits_{\O\ni z\to x}u(z)=\phi(x)$ for all $x\in\pa\O$. Without loss of generality, we may assume that there exists $M>0$ such that for all $z\in\overline{\O}$, we have $-M \leq u(z)\leq 0$. By Lemma \ref{bd34} we can find a decreasing sequence of continuous functions $u_j\in SH_{\om,m}(\O)\cap C^0(\overline{\O})$, $\lim\limits_{\O\ni z\to x}u_j(z)=\phi(x)$ for all $x\in\pa\O$ for all $j\geq 1$ and $u_j\searrow u$ on $\Omega$. Because $u_j\searrow u$ on $\O$ then we may assume that $-M\leq u_j\leq 0$ on $\O$ for $j\geq 1$. On the other hand, since $(\om+ dd^c u)^m\wed\beta^{n-m}=F(u,z)d\mu\leq d\mu$ and $\mu$ is $\gamma$-Dini-diffuse  with respect to $(\om,m)$-capacity then so is $(\om+ dd^c u)^m\wed\beta^{n-m}$. Next, by applying Lemma 14 in \cite{HQ23} to $u_j$ and $u$ we get that
\begin{equation}\label{eq3.6}
\sup\limits_{\overline{\O}}(u_j-u)\leq C.H_{\gamma}\bigg(\int\limits_{\O}(u_j-u)(\om+dd^c u)^m\wedge\beta^{n-m}\bigg),
\end{equation}

\n where $C=C(M)>0$ is a constant independent on $u_j$ and $H_{\gamma}$ is an inverse function of the function $s\mapsto s^{m(m+2)+1}F_{\gamma}^{-1}(s^{m+2})$ as in the statement of Lemma 14 in \cite{HQ23}. Since $ u_j \searrow u$ as $ j \to \infty$, by the Lebesgue Monotone Convergence Theorem, the right-hand side of \eqref{eq3.6} tends to 0. Hence, $u_j$ is uniformly convergent to $u$ on $\overline{\O}$ and it follows that $u$ is continuous on $\overline{\O}$. The proof of Theorem \ref{th1.1} is now complete.

\end{document}